  \newcommand{\textcyr}[1]{%
    {\fontencoding{OT2}\fontfamily{wncyr}\fontseries{m}\fontshape{n}%
     \selectfont #1}}
\newcommand{\Sha}{{\mbox{\textcyr{Sh}}}}
\newcommand{\Cok}{{\mbox{\textcyr{B}}}}
\newcommand{\hra}{\hookrightarrow}
\newcommand{\Q}{\mathbb{Q}}
\newcommand{\A}{{\mathbb A}}
\newcommand{\G}{{\mathbb G}}
\newcommand{\I}{{\mathbb I}}
\newcommand{\Z}{\mathbb{Z}}
\newcommand{\isom}{\cong}
\DeclareMathOperator{\coker}{coker}
\DeclareMathOperator{\Br}{Br}
\DeclareMathOperator{\Gal}{Gal}
\DeclareMathOperator{\Nm}{Nm}
\DeclareMathOperator{\Pic}{Pic}
\DeclareMathOperator{\inv}{inv}
\DeclareMathOperator{\Spec}{Spec}
\DeclareMathOperator{\Hom}{Hom}
\theoremstyle{plain}
\newtheorem{theorem}{Theorem}[section]
\newtheorem{proposition}[theorem]{Proposition}
\newtheorem{corollary}[theorem]{Corollary}
\newtheorem{lemma}[theorem]{Lemma}
\theoremstyle{definition}
\theoremstyle{remark}
\newtheorem{remark}[theorem]{Remark}
\numberwithin{equation}{section}
\newcommand{\thetitle}
{Brauer groups of torsors under algebraic tori}
\begin{document}

\title{\thetitle}
\author{Saikat Biswas}
\address{School of Mathematical and Statistical Sciences, Arizona State University,
Tempe, AZ}
\email{Saikat.Biswas@asu.edu}

\begin{abstract}
Let $T$ be an algebraic torus defined over a global field $K$. For any $K$-torsor $X$ under $T$, we relate the Brauer group of $X$ to the ad\'{e}le class group of $T$ as well as to the Shafarevich Tate group of $T$.
\end{abstract}

\subjclass[2010]{Primary 11E72; Secondary 11R37}

\keywords{Algebraic tori, ad\`{e}le class group, Brauer groups, Shafarevich-Tate groups, global fields.}

\maketitle


\section{Introduction}
Let $L/K$ be a finite extension of global fields with Galois group $G$. Let $T$ be an algebraic torus of dimension $d$ defined over $K$ and split over $L$, so that $T_L\isom\G_m^d$. Let $C_L$ be the idele class group of $L$ and let $C_L(T)$ denote the \emph{ad\'{e}le class group} of $T$ over $L$. There is a canonical action of $G$ on $C_L(T)$, and the corresponding Tate cohomology groups $\hat{H}^i(G,C_L(T))$ are finite. Consider a smooth, geometrically integral $K$-variety $X$ which is a torsor under $T$. Let $\Br(X)=H^2(X,\G_m)$ be the \emph{cohomological Brauer group} of $X$. The kernel of the natural map $\Br(X)\to\Br(\overline{X})$, where $\overline{X}=X\otimes_{K}\overline{K}$,  is called the \emph{algebraic Brauer group} of $X$, and denoted by $\Br_1(X)$. There is a map $\Br_1(X)\to\Br_1(X_L)^G$  whose kernel we denote by $\Br_1(X_{L/K})$. As we show in this paper, there is also a map $\Br(L/K)\to\Br_1(X_{L/K})$ whose image we denote by $\Br_0(X_{L/K})$. Under this setting, we show that a consequence of one of the main results of this paper (see Theorem \ref{t6}) is

\begin{theorem}\label{mt1}
The quotient group $\Br_1(X_{L/K})/\Br_0(X_{L/K})$ is finite, and its order divides that of ${C_L(T)}^{G}/\Nm{C_L(T)}$. The orders are equal when $L/K$ is a finite, cyclic extension.
\end{theorem}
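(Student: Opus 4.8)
The plan is to deduce the statement from the cohomological description of $\Br_1(X_{L/K})/\Br_0(X_{L/K})$ furnished by Theorem \ref{t6}, and then to read off the order relation from Tate cohomology together with the global duality for tori. The key observation is that $C_L(T)^G/\Nm\, C_L(T)$ is by definition the Tate group $\hat{H}^0(G,C_L(T))$, so the whole statement becomes a comparison of two Tate cohomology groups.

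First I would record the cohomological skeleton I expect Theorem \ref{t6} to provide. Since $T$ splits over $L$, the torsor $X_L$ is trivial, $\Pic(X_L)=0$, and the units sit in a short exact sequence of $G$-modules
\[
0 \to L^* \to \mathcal{O}^*(X_L) \to \hat{T} \to 0,
\]
where $\hat{T}$ is the character lattice of $T$ and the extension class is the class of the torsor in $H^1(G,T(L))$. The Hochschild--Serre spectral sequence for $X_L\to X$ identifies $\Br_1(X_{L/K})$ with $H^2(G,\mathcal{O}^*(X_L))$ and the map $\Br(L/K)=H^2(G,L^*)\to\Br_1(X_{L/K})$ with the map induced by $L^*\hra\mathcal{O}^*(X_L)$, so that
\[
\Br_1(X_{L/K})/\Br_0(X_{L/K}) \isom \ker\!\Big(H^2(G,\hat{T}) \xrightarrow{\,\partial\,} H^3(G,L^*)\Big),
\]
with $\partial$ the connecting homomorphism of the units sequence. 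In particular this quotient is a subgroup of the finite group $H^2(G,\hat{T})$, hence finite.

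Next I would compute the order of $\hat{H}^0(G,C_L(T))$. Because $T_L\isom\G_m^d$ one has $C_L(T)\isom\check{T}\tensor_{\Z}C_L$ as $G$-modules, where $\check{T}$ is the cocharacter lattice. Cup product with the fundamental class (Tate--Nakayama) gives $\hat{H}^0(G,C_L(T))\isom\hat{H}^{-2}(G,\check{T})$, and the perfect cup-product pairing $\hat{T}\tensor\check{T}\to\Z$ for the finite group $G$ yields $|\hat{H}^{-2}(G,\check{T})|=|\hat{H}^{2}(G,\hat{T})|=|H^2(G,\hat{T})|$. Thus $|C_L(T)^G/\Nm\, C_L(T)|=|H^2(G,\hat{T})|$. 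Combining the two steps, $\Br_1(X_{L/K})/\Br_0(X_{L/K})$ is isomorphic to a subgroup of $H^2(G,\hat{T})$, so its order divides $|H^2(G,\hat{T})|=|C_L(T)^G/\Nm\, C_L(T)|$, which is the first assertion. For the equality when $L/K$ is cyclic I would note that $\partial$ takes values in $H^3(G,L^*)=\hat{H}^3(G,L^*)$, and that for cyclic $G$ the two-periodicity of Tate cohomology combined with Hilbert 90 gives $\hat{H}^3(G,L^*)\isom\hat{H}^1(G,L^*)=0$; hence $\partial=0$, the kernel is all of $H^2(G,\hat{T})$, and the orders coincide.

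The main obstacle I anticipate is not the order bookkeeping but making the identification in the first step canonical and compatible with the map from $\Br(L/K)$: one must check that the edge maps of the Hochschild--Serre spectral sequence carry the extension class of the units sequence to the connecting map $\partial$, and that the Tate--Nakayama isomorphism is compatible with the inclusion $\ker\partial\hra H^2(G,\hat{T})$ used to realize $\Br_1(X_{L/K})/\Br_0(X_{L/K})$ as a subgroup of $\hat{H}^0(G,C_L(T))$. Granting Theorem \ref{t6}, this compatibility is precisely what upgrades the divisibility to an equality exactly when the obstruction group $H^3(G,L^*)$ vanishes, i.e. in the cyclic case.
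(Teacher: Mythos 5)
Your argument is correct, but it reaches Theorem \ref{mt1} by a different mechanism than the paper. The paper deduces the theorem from Theorem \ref{t6}: there, Lemma \ref{l4} (Rosenlicht's lemma plus Hochschild--Serre over $\overline{K}$) gives the exact sequence $\Br(K)\to\Br_1(X)\to H^2(K,X^{*}(T))\to 1$, and the snake lemma applied to the restriction maps from this sequence into the $G$-invariants of the corresponding sequence over $L$ produces $1\to H\to\Br(L/K)\to\Br_1(X_{L/K})\to H^2(L/K,X^{*}(T))\to 1$; cyclicity enters exactly where it enters for you, namely to kill $H^3(L/K,L^{\times})$, and the order comparison is then a citation of the global Nakayama--Tate theorem (Theorem \ref{t3}), recorded as Corollary \ref{c2}. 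You instead work at the finite level throughout: Hilbert 90 trivializes $X_L$, so $\Pic(X_L)=0$ and Hochschild--Serre for the covering $X_L\to X$ identifies $\Br_1(X_{L/K})\isom H^2(G,L[X_L]^{*})$, whence the long exact sequence of $1\to L^{\times}\to L[X_L]^{*}\to X^{*}(T)\to 1$ realizes $\Br_1(X_{L/K})/\Br_0(X_{L/K})$ as $\ker\left(H^2(G,X^{*}(T))\to H^3(G,L^{\times})\right)$. Your decomposition buys two things: the finiteness and divisibility assertions come out for an arbitrary finite Galois extension in the same stroke (the paper relegates that case to the remark following Corollary \ref{c2}), and the obstruction to equality is visibly the single group $H^3(G,L^{\times})$. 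What it does not produce is the finer content of Theorem \ref{t6} --- the kernel term $H$ of order $[H^1(G,X^{*}(T))]/[\Pic(X)]$ --- which the paper needs later in the proof of Theorem \ref{t7}. Two minor points: you re-derive global Nakayama--Tate via cup product with the fundamental class and finite-group duality, where citing Theorem \ref{t3} at $i=0$ (which gives $\hat{H}^0(G,C_L(T))\isom H^2(L/K,X^{*}(T))^{\vee}$) would suffice; and the compatibility you flag at the end --- that the paper's map $\Br(L/K)\to\Br_1(X_{L/K})$ matches, under your identifications, the map induced by $L^{\times}\hra L[X_L]^{*}$ --- is indeed required for the quotient computation, but it is the standard functoriality of Hochschild--Serre applied to the morphism of Galois coverings $(X_L\to X)\to(\Spec L\to\Spec K)$, so there is no genuine gap.
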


Let $v$ be any prime of $K$ and $w$ be that of $L$ dividing $v$. Let $K_v$ and $L_w$ denote the corresponding completions.
We define the group $\Br_1(X_{L_w/K_v})$ similar to the global case above. Let $\Br'_1(X_{L/K})$ and $\Cok$ be defined by the exactness of the sequence
$$1 \to \Br'_1(X_{L/K}) \to \Br_1(X_{L/K}) \to \bigoplus_{v}\Br_1(X_{L_w/K_v}) \to \Cok \to 1$$
where the sum is over all primes $v$ of $K$. The \emph{Shafarevich-Tate group} of $T$, denoted by $\Sha(T/K)$, represents the group of isomorphism classes of principal homogeneous spaces of $T$ that have a $K_v$-rational point for all $v$ but does not have any non-trivial $K$-rational points. It is known that $\Sha(T/K)$ is finite. The second main result in this paper relates $ \Br'_1(X_{L/K})$ to $\Sha(T/K)$.

\begin{theorem}\label{mt2}
Suppose that $X(K_v)\neq\emptyset$ for all primes $v$. Then $\Br'_1(X_{L/K})$ is finite and its order divides that of $\Sha(T/K)$. Furthermore, if $L/K$ is cyclic then we have
$$[\Br'_1(X_{L/K})]=[{C_L(T)}^{G}\cap\Nm(C_L):C_K(T)]$$
and
$$\frac{[\Br'_1(X_{L/K})]}{[\Cok]}=\frac{[\Sha(T/K)]}{[L:K][T(\A_K)\cap\Nm(C_L(T)):\Nm(T(\A_L))]}$$
where $\Nm$ is the norm map on $L/K$, and $T(\A_L)$ is the adele group of $T$ over $L$.
\end{theorem}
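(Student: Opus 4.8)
The plan is to deduce both statements from Theorem~\ref{t6} by comparing it with its analogue over each completion and then running the snake lemma. Writing $\Br_a(X_{L/K}):=\Br_1(X_{L/K})/\Br_0(X_{L/K})$, I would begin from the short exact sequence
$$0\to\Br_0(X_{L/K})\to\Br_1(X_{L/K})\to\Br_a(X_{L/K})\to0$$
together with the analogous sequence in which every term is replaced by the corresponding local sum $\bigoplus_v(-)$, the two being linked by the localization maps. Theorem~\ref{t6}, applied globally (this is the content of Theorem~\ref{mt1}) and over each $K_v$, identifies the right-hand columns with $\hat H^0(G,C_L(T))$ and with $\bigoplus_v\hat H^0(G_w,T(L_w))$, where $G_w\subseteq G$ is the decomposition group at $w\mid v$ and the local adèle class group is just $T(L_w)$. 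The snake lemma applied to this ladder yields
$$0\to\ker_0\to\Br'_1(X_{L/K})\to\ker_a\to\coker_0\to\Cok\to\coker_a\to0,$$
where $\ker_0,\coker_0$ (resp. $\ker_a,\coker_a$) are the kernel and cokernel of the localization map on the $\Br_0$-column (resp. the $\Br_a$-column); by construction $\Br'_1(X_{L/K})$ and $\Cok$ are exactly the kernel and cokernel of the middle column. The whole problem is thereby reduced to computing the two outer columns.

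For the $\Br_0$-column I would invoke global class field theory. Since $X(K_v)\neq\emptyset$ for every $v$, a local point gives a section of $\Br(K_v)\hra\Br(X_{K_v})$, and as $\Br(K)\hra\bigoplus_v\Br(K_v)$ the map $\Br(K)\to\Br(X)$ is injective; hence $\Br_0(X_{L/K})\isom\Br(L/K)$ and likewise locally. The fundamental exact sequence $0\to\Br(L/K)\to\bigoplus_v\Br(L_w/K_v)\to\tfrac1{[L:K]}\Z/\Z\to0$ then gives $\ker_0=0$ and $[\coker_0]=[L:K]$; this is the origin of the factor $[L:K]$ in the denominator of the second formula.

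For the $\Br_a$-column, which is the heart of the matter, I would take $G$-Tate cohomology of $1\to T(L)\to T(\A_L)\to C_L(T)\to1$. Because $T$ splits over $L$, inflation--restriction and Hilbert~90 give $\hat H^1(G,T(L))\isom H^1(K,T)$ and $\hat H^1(G,T(\A_L))\isom\bigoplus_v H^1(K_v,T)$, so the connecting map $\partial\colon\hat H^0(G,C_L(T))\to H^1(K,T)$ has image exactly $\Sha(T/K)=\ker\bigl(H^1(K,T)\to\bigoplus_v H^1(K_v,T)\bigr)$, while its kernel is the image of $\hat H^0(G,T(\A_L))=T(\A_K)/\Nm T(\A_L)$; indeed one checks that the norm index $[T(\A_K)\cap\Nm(C_L(T)):\Nm(T(\A_L))]$ is precisely the kernel of $\hat H^0(G,T(\A_L))\to\hat H^0(G,C_L(T))$. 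Matching $\partial$ against the localization map $\hat H^0(G,C_L(T))\to\bigoplus_v\hat H^0(G_w,T(L_w))$ and using Tate--Nakayama duality at each place then expresses $[\ker_a]$ and $[\coker_a]$ through $[\Sha(T/K)]$ and this norm index, and identifies $\Br'_1(X_{L/K})$ with $\ker_a$ (the connecting map $\ker_a\to\coker_0$ vanishing), whose order is the index $[C_L(T)^G\cap\Nm(C_L):C_K(T)]$; this is the first formula. Taking the alternating product of orders along the six-term sequence, with $\ker_0=0$ and $[\coker_0]=[L:K]$, then produces the displayed ratio. Finiteness and the unconditional divisibility $[\Br'_1(X_{L/K})]\mid[\Sha(T/K)]$ follow already from the finiteness of $\hat H^0(G,C_L(T))$ and the embedding of $\Br'_1(X_{L/K})$ into the subquotient of $\hat H^0(G,C_L(T))$ cut out by $\partial$.

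I expect the main obstacle to be twofold. First one must verify that the local applications of Theorem~\ref{t6} are genuinely compatible with the global one under restriction to the decomposition groups $G_w$, so that the two short exact sequences form a commutative ladder; without this compatibility the snake lemma is simply unavailable, and checking it amounts to tracing the construction behind Theorem~\ref{t6} through localization. Second, and more seriously, the passage from a mere divisibility to the exact index formulas rests on the periodicity $\hat H^0\isom\hat H^2$ of Tate cohomology and on the multiplicativity of the Herbrand quotient, both of which require $G$ to be cyclic. This is exactly why the equalities are asserted only in the cyclic case, whereas for a general Galois extension the failure of periodicity leaves one with the finiteness statement and the divisibility by $[\Sha(T/K)]$ alone.
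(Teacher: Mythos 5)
Your ladder is, after the identifications you yourself make ($\Br_0(X_{L/K})\isom\Br(L/K)$ via the everywhere--local--points argument, $\Br_1(X_{L/K})/\Br_0(X_{L/K})\isom H^2(L/K,X^{*}(T))$ via Theorem \ref{t6}, and likewise locally), exactly the diagram in the paper's proof of Theorem \ref{t7}, and your treatment of the $\Br_0$-column (class field theory, as in Lemma \ref{l3}) and of the $\Br_a$-column via Tate--Nakayama matches the paper; the alternating-product derivation of the ratio is also the paper's. The genuine problem is your parenthetical claim that the connecting map $\delta\colon\ker_a\to\coker_0$ vanishes, so that $\Br_1'(X_{L/K})\isom\ker_a$. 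This is unjustified, and it is inconsistent with the very formula you derive from it. Indeed $\ker_a\isom\Sha(T/K)$ (Corollary \ref{c1}; equivalently your own computation, since duality turns $\ker_a$ into the dual of the cokernel of $\hat H^0(G,T(\A_L))\to\hat H^0(G,C_L(T))$, i.e.\ the dual of the image of $\partial$, which is all of $\Sha(T/K)$ --- note there is no natural map $\hat H^0(G,C_L(T))\to\bigoplus_v\hat H^0(G_w,T(L_w))$; the arrows go the other way), and $[\Sha(T/K)]=[C_L(T)^G:C_K(T)]$ by Proposition \ref{p5}. So if $\delta$ were zero you would get $[\Br_1'(X_{L/K})]=[C_L(T)^G:C_K(T)]$, not $[C_L(T)^G\cap\Nm(C_L):C_K(T)]$, and the divisibility by $[\Sha(T/K)]$ would always be an equality --- precisely what the theorem's word ``divides'' is designed not to assert. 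A telling symptom: the right-hand side of the first formula involves $\Nm(C_L)$, but $\ker_a$ is defined purely by the $X^{*}(T)$-column and cannot see $\Nm(C_L)$; that group can only enter through $\coker_0\isom C_K/\Nm C_L$, i.e.\ precisely through a nonzero $\delta$.

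The repair is the paper's route: keep $\coker_0$ as the group $C_K/\Nm C_L$ rather than merely recording its order, conclude from the six-term sequence that $\Br_1'(X_{L/K})=\ker\bigl(\delta\colon\Sha(T/K)\to C_K/\Nm C_L\bigr)$, and then transport $\delta$ through the isomorphism $\Sha(T/K)\isom C_L(T)^G/C_K(T)$ of Proposition \ref{p5}; its kernel is $\bigl(C_L(T)^G\cap\Nm(C_L)\bigr)/C_K(T)$, which is the first formula. The other two assertions of the statement survive your error untouched: $\Br_1'(X_{L/K})\hra\ker_a\isom\Sha(T/K)$ gives finiteness and divisibility, and the alternating product of orders along $0\to\Br_1'(X_{L/K})\to\ker_a\to\coker_0\to\Cok\to\coker_a\to 0$ yields the ratio formula whether or not $\delta=0$.
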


We now briefly describe the organization of this paper. In Section 2, we summarize some basic results regarding the arithmetic of algebraic tori. In Section 3, we briefly discuss some results from global class field theory. In Section 4, we study the Picard group and the Brauer group of torsors under tori. In Section 5, we state and prove the main results of this paper.

\section{Galois cohomology of algebraic tori}
Let $T$ be an algebraic torus defined over any perfect field $K$. Thus $T$ is an algebraic group for which there is an isomorphism $T\isom\G_m^d$, where the integer $d$ is the dimension of $T$. In general, this isomorphism is defined over some finite field extension $L$ of $K$, called the \emph{splitting field} of $T$. Let $X^{*}(T):=\Hom(T,\G_m)$ be the group of \emph{characters} of $T$. For a $d$-dimensional torus $T$, it follows that $X^{*}(T)\isom\Z^d$. In particular, $X^{*}(T)$ is a finitely-generated torsion-free $\Z$-module. To say that an extension $L$ of $K$ is a splitting field of $T$ is equivalent to saying that $X^{*}(T)=X^{*}(T)_L$, i.e. all its characters are defined over $L$. This can also be explained in terms of the natural continuous action of the profinite group $G_K:=\Gal(\overline{K}/K)$ on the discrete group $X^{*}(T)$, which gives the latter the structure of a $\Z[G_K]$-module. In such a case, when $L$ is the splitting field of $T$, the open subgroup $G_L\subset G_K$ corresponding to $L$ acts trivially on $X^{*}(T)$, i.e. $X^{*}(T)={X^{*}(T)}^{G_L}$. Note further that we also have 
$$T(L)\isom L^{\times{d}}\isom\Hom(\Z^d,L^{\times})=\Hom(X^{*}(T),L^{\times})$$
so that we may also write $T=\Hom(X^{*}(T),\G_m)$. A fundamental result (see \cite{ono},\cite{plato}) in the theory of algebraic tori is 

\begin{theorem}\label{t1}
The functor $T\mapsto X^{*}(T)$ sets up an anti-equivalence between the category of $K$-tori split by $L$ and the category of $\Z$-torsion-free finitely generated $\Z[G_{L/K}]$-modules.
\end{theorem}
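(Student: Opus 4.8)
The plan is to prove this by Galois descent, reducing the statement to the elementary character duality over the splitting field and then transporting it to $K$. Write $G := G_{L/K} = \Gal(L/K)$. First I would check that the functor lands in the stated target category: for a $K$-torus $T$ split by $L$, the character group $X^{*}(T) = \Hom_{\Kbar}(T_{\Kbar}, \G_m)$ is a free $\Z$-module of rank $d = \dim T$ (hence finitely generated and $\Z$-torsion-free), and the natural $G_K$-action factors through $G$ precisely because $G_L$ fixes every character. A $K$-morphism $T \to T'$ pulls characters back to a $G$-equivariant map $X^{*}(T') \to X^{*}(T)$, so the functor is contravariant; it will yield the asserted \emph{anti}-equivalence once it is shown to be fully faithful and essentially surjective.

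The base case is the computation over $L$, where both tori are split. Since every endomorphism of $\G_m$ has the form $t \mapsto t^{n}$, one has $\Hom_L(\G_m, \G_m) \isom \Z$, whence $\Hom_L(\G_m^{d}, \G_m^{d'}) \isom \Hom_\Z(\Z^{d'}, \Z^{d})$ via monomial maps; equivalently $T_L \mapsto X^{*}(T_L)$ is an anti-equivalence between split $L$-tori and finite free $\Z$-modules. For full faithfulness over $K$, I would invoke descent for morphisms: a map defined over $L$ descends to $K$ exactly when it is $G$-invariant, so that
$$\Hom_{K}(T, T') \isom \Hom_{L}(T_L, T'_L)^{G} \isom \Hom_{\Z}(X^{*}(T'), X^{*}(T))^{G} = \Hom_{\Z[G]}(X^{*}(T'), X^{*}(T)),$$
where the last equality is merely the definition of a $G$-module homomorphism. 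This uses only $L^{G} = K$ together with the base-case duality.

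For essential surjectivity, given a finitely generated $\Z$-torsion-free $\Z[G]$-module $M$, I would build a torus from its group algebra. Torsion-freeness gives $M \isom \Z^{d}$ as an abelian group, so $L[M]$ is the Hopf algebra of a split $L$-torus $T_M$ with $X^{*}(T_M) = M$; the $G$-action on $M$ combined with the Galois action on $L$ equips $L[M]$ with a semilinear $G$-action, i.e.\ a descent datum. Setting $A := L[M]^{G}$ produces a Hopf algebra over $K$, and $T := \Spec A$ is the desired object: faithfully flat (Galois) descent gives $A \tensor_K L \isom L[M]$, so $T_L \isom T_M$ is split with $X^{*}(T) \isom M$. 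As a sanity check, the regular representation $\Z[G]$ should correspond here to the Weil restriction $\Res_{L/K}\G_m$, and permutation modules to quasi-trivial tori.

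The main obstacle I anticipate is making the descent rigorous rather than merely formal: one must verify that descent data for tori split by $L$ correspond \emph{exactly} to $\Z[G]$-module structures on the character group, and --- the crux of essential surjectivity --- that such descent is effective, so that the invariant Hopf algebra $L[M]^{G}$ genuinely descends $T_M$ to a $K$-torus. Effectivity holds because affine schemes are quasi-coherent algebras and faithfully flat descent of quasi-coherent data is effective; the remaining care is to confirm that the descended group scheme is smooth, connected and of multiplicative type --- that is, actually a torus --- which follows since these properties are stable under the faithfully flat base change $\Spec L \to \Spec K$.
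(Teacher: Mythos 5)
The paper does not actually prove Theorem \ref{t1}: it is quoted as a fundamental result with citations to Ono and Platonov--Rapinchuk, so there is no internal proof to compare against. Judged on its own, your proposal is a correct outline of the standard argument, and it is essentially the argument of those references: the split case $\Hom_L(\G_m^{d},\G_m^{d'})\isom\Hom_{\Z}(\Z^{d'},\Z^{d})$, full faithfulness via Galois descent of morphisms, and essential surjectivity by descending the Hopf algebra $L[M]$ along its semilinear $G$-action (effective because the descent datum lives on a quasi-coherent algebra, with the torus property checked after base change back to $L$). Two routine points should be verified rather than asserted, and you correctly flag them yourself as the crux. First, the split-case identification $\Hom_L(T_L,T'_L)\isom\Hom_{\Z}(X^{*}(T'),X^{*}(T))$ must be shown to be $G$-equivariant, where $G$ acts on morphisms by conjugation $f\mapsto\sigma\circ f\circ\sigma^{-1}$ through the descent data of $T$ and $T'$; only after this does taking $G$-invariants yield $\Hom_K(T,T')\isom\Hom_{\Z[G]}(X^{*}(T'),X^{*}(T))$, since $\Hom_{\Z}(-,-)^{G}=\Hom_{\Z[G]}(-,-)$ is indeed just the definition once the action is the conjugation action. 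Second, for essential surjectivity you need $X^{*}(T)\isom M$ as $\Z[G]$-modules, not merely as abelian groups, for $T=\Spec\left(L[M]^{G}\right)$; this follows from the same equivariance computation, because the characters of $T_L\isom T_M$ are exactly the basis elements $m\in M\subset L[M]$ and the descent datum sends the character $m$ to $\sigma(m)$. With these two verifications written out, your proposal is a complete and correct proof, and your sanity check that $\Z[G]$ corresponds to $\Res_{L/K}\G_m$ is the right consistency test.
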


We now review results regarding the cohomology groups $H^i(K,T):=H^i(G_K,T)$ where $K$ will either be a local field or a global field. In view of Theorem \ref{t1}, it is natural to relate $H^i(K,T)$ to $H^i(K,X^{*}(T))$. First, however, we wish to replace the cohomology of the profinite group $G_K$ by the cohomology of a finite quotient group.

\begin{lemma}\label{l1}
For a $K$-torus $T$ split by a finite Galois extension $L/K$, we have $H^1(K,T)=H^1(L/K,T(L))$.
\end{lemma}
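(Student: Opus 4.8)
The plan is to deduce the identification from the inflation–restriction exact sequence attached to the normal subgroup $G_L \trianglelefteq G_K$ with quotient $G_{L/K}=\Gal(L/K)$, applied to the $G_K$-module $T(\Kbar)$. First I would record the identification of invariants $T(\Kbar)^{G_L}=T(L)$: writing $T=\Hom(X^{*}(T),\G_m)$ as in the previous section, we have $T(\Kbar)=\Hom(X^{*}(T),\Kbar^{\times})$, and since $L$ is a splitting field the subgroup $G_L$ acts trivially on $X^{*}(T)$, so taking $G_L$-invariants gives $\Hom(X^{*}(T),(\Kbar^{\times})^{G_L})=\Hom(X^{*}(T),L^{\times})=T(L)$. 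This is exactly the hypothesis needed so that the coefficient module $T(L)$ of the quotient group $G_{L/K}$ is the module of $G_L$-invariants of the $G_K$-module $T(\Kbar)$.

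With this in hand, the low-degree (five-term) exact sequence of the Hochschild–Serre spectral sequence yields the exact sequence
$$0 \to H^1(G_{L/K}, T(L)) \to H^1(G_K, T(\Kbar)) \to H^1(G_L, T(\Kbar)),$$
in which the first map is inflation and the second is restriction. By definition the middle term is $H^1(K,T)$ and the left term is $H^1(L/K,T(L))$, so it remains only to show that the restriction map has trivial image, that is, that inflation is surjective; for this it suffices to prove that the final term vanishes.

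The vanishing of $H^1(G_L, T(\Kbar))$ is where the splitting hypothesis does the real work, and is the one genuinely nontrivial input. Over $L$ the torus becomes split, $T_L\isom\G_m^d$, so as a $G_L$-module $T(\Kbar)\isom(\Kbar^{\times})^{d}$. Since cohomology commutes with finite direct sums, $H^1(G_L,T(\Kbar))\isom H^1(G_L,\Kbar^{\times})^{d}$, and each factor $H^1(G_L,\Kbar^{\times})=H^1(L,\G_m)$ vanishes by Hilbert's Theorem 90. Feeding this into the exact sequence above forces the inflation map to be an isomorphism, giving $H^1(K,T)=H^1(L/K,T(L))$ as claimed. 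The only point requiring care is checking that the identification $T(\Kbar)^{G_L}=T(L)$ is $G_{L/K}$-equivariant, so that the five-term sequence may legitimately be applied with $T(L)$ as the coefficient module for $G_{L/K}$; this is routine given the functorial description $T=\Hom(X^{*}(T),\G_m)$.
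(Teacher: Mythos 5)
Your proof is correct and follows essentially the same route as the paper: the inflation--restriction (Hochschild--Serre) exact sequence for $G_L \trianglelefteq G_K$ together with the vanishing of $H^1(L,T)$ via Hilbert's Theorem 90 applied to the split torus $T_L\isom\G_m^d$. You merely spell out details the paper leaves implicit (the identification $T(\Kbar)^{G_L}=T(L)$ and the reduction of $H^1(G_L,T(\Kbar))$ to $H^1(L,\G_m)^d$), which is fine.
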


\begin{proof}
The low-degree terms of the Hochschild-Serre spectral sequence $H^r(L/K,H^s(L,T))\Rightarrow H^{r+s}(K,T)$ gives an exact sequence
$$1 \to H^1(L/K,T(L)) \to H^1(K,T) \to H^1(L,T)$$
But $H^1(L,T)=1$ by Hilbert's Theorem 90.
\end{proof}

Now note that the low degree terms of the Hochschild-Serre spectral sequence
$$H^r(L/K,H^s(L,X^{*}(T)))\Rightarrow H^{r+s}(K,X^{*}(T))$$
yields the exact sequence
\begin{align*}
1 &\to H^1(L/K,X^{*}(T)) \to H^1(K,X^{*}(T)) \to H^1(L,X^{*}(T))\\
&\to H^2(L/K,X^{*}(T)) \to \ker\left(H^2(K,X^{*}(T))\to{H^2(L,X^{*}(T))}^G\right)\\
&\to H^1(L/K,H^1(L,X^{*}(T)))
\end{align*}
Since $T$ splits over $L$, we find that $G_L:=\Gal(\overline{L}/L)$ acts trivially on $X^{*}(T)_L\isom\Z^d$ . This implies that 
$$H^1(L,X^{*}(T))\isom H^1(G_L,\Z^d)=\Hom(G_L,\Z^d)$$
and the last group is trivial since $G_L$ is torsion and $\Z^d$ is torsion-free. Thus we have

\begin{lemma}\label{l1a}
For a $K$-torus $T$ split by a finite Galois extension $L/K$ with Galois group $G$, we have 
$H^1(K,X^{*}(T))\isom H^1(L/K,X^{*}(T))$, and the sequence
$$1 \to H^2(L/K,X^{*}(T)) \to H^2(K,X^{*}(T)) \to {H^2(L,X^{*}(T))}^G$$
is exact.
\end{lemma}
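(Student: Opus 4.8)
The plan is to read off both assertions directly from the low-degree exact sequence of the Hochschild--Serre spectral sequence displayed just before the statement, feeding in the vanishing $H^1(L,X^{*}(T))=0$ established in the preceding paragraph. No new input is required; the work consists only in splitting that sequence at the appropriate spots.

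First I would extract the isomorphism $H^1(K,X^{*}(T))\isom H^1(L/K,X^{*}(T))$. The initial segment of the sequence reads
$$1 \to H^1(L/K,X^{*}(T)) \to H^1(K,X^{*}(T)) \to H^1(L,X^{*}(T)),$$
and since the rightmost term vanishes, the inflation map on the left is an isomorphism, which is the first claim.

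For the second assertion I would exploit the vanishing of $H^1(L,X^{*}(T))$ at both places where it enters the tail of the sequence. On one hand, the term $H^1(L,X^{*}(T))$ sitting immediately to the left of $H^2(L/K,X^{*}(T))$ is zero, so the map $H^2(L/K,X^{*}(T))\to\ker(H^2(K,X^{*}(T))\to{H^2(L,X^{*}(T))}^{G})$ is injective. On the other hand, the term $H^1(L/K,H^1(L,X^{*}(T)))=H^1(L/K,0)=0$ sits immediately to the right of that kernel, so the same map is surjective. Hence inflation identifies $H^2(L/K,X^{*}(T))$ with $\ker(H^2(K,X^{*}(T))\to{H^2(L,X^{*}(T))}^{G})$, which is precisely the exactness of
$$1 \to H^2(L/K,X^{*}(T)) \to H^2(K,X^{*}(T)) \to {H^2(L,X^{*}(T))}^{G}.$$

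Since every ingredient is already in place, I do not anticipate a genuine obstacle; the only point deserving a moment's care is the bookkeeping of the edge maps of the spectral sequence, namely confirming that the relevant map out of $H^2(K,X^{*}(T))$ is the restriction landing in the $G$-invariants ${H^2(L,X^{*}(T))}^{G}$, so that the kernel appearing in the low-degree sequence matches the kernel in the statement verbatim.
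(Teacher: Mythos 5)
Your proposal is correct and is essentially identical to the paper's own argument: the paper derives the lemma by feeding the vanishing $H^1(L,X^{*}(T))=\Hom(G_L,\Z^d)=0$ into the displayed low-degree Hochschild--Serre sequence, exactly as you do, with the same splitting of the sequence at both occurrences of the vanishing term. Your closing remark about checking that the edge map is restriction into ${H^2(L,X^{*}(T))}^G$ is the right point of care, and it is already built into the form of the sequence the paper displays.
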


It will be convenient for us to use the Tate cohomology groups $\hat{H}^i(L/K,T)$ for all $i$. Recall that for any finite abelian group $M$, $M^{\vee}:=\Hom(M,\Q/\Z)$ denotes the Pontryagin dual of $M$. The first fundamental result that is needed is
the local version of the Nakayama-Tate theorem (\cite{nsw},\cite{plato}).

\begin{theorem}[Nakayama-Tate; Local version]\label{t2}
Given a local field $K$ and a $K$-torus $T$ with splitting field $L$, there is an isomorphism
$$\hat{H}^i(L/K,T)\isom {\hat{H}^{2-i}(L/K,X^{*}(T))}^{\vee}$$
for any $i$.
\end{theorem}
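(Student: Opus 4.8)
The plan is to reduce the statement to two standard tools: Tate's cup-product theorem for class formations, and the cup-product duality for the cohomology of a finite group. The first step is purely module-theoretic. Since $T=\Hom(X^{*}(T),\G_m)$ and $X^{*}(T)\isom\Z^d$ is free of finite rank over $\Z$, there is a natural $G$-equivariant isomorphism
$$T(L)=\Hom(X^{*}(T),L^{\times})\isom X_{*}(T)\tensor_{\Z}L^{\times},$$
where $X_{*}(T):=\Hom(X^{*}(T),\Z)$ is the cocharacter lattice. This rewriting is the key point that exhibits the formation module $L^{\times}$ as a tensor factor of $T(L)$.

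Next I would put the local class formation to work. By Hilbert's Theorem 90 we have $H^1(H,L^{\times})=1$ for every subgroup $H\le G$, and local class field theory furnishes the fundamental class $u\in H^2(G,L^{\times})$, whose restriction generates the cyclic group $\hat{H}^2(H,L^{\times})$ of order $|H|$ for every $H$, together with the invariant isomorphism $\inv\colon\hat{H}^2(G,L^{\times})\isom\tfrac{1}{|G|}\Z/\Z\subset\Q/\Z$. These are exactly the hypotheses of Tate's theorem, which I would apply to the torsion-free module $X_{*}(T)$: cup product with $u$ yields, for every $i$, an isomorphism
$$\hat{H}^{i-2}(G,X_{*}(T))\xrightarrow{\ \cup\,u\ }\hat{H}^{i}(G,X_{*}(T)\tensor L^{\times})\isom\hat{H}^{i}(G,T(L))=\hat{H}^{i}(L/K,T).$$
Here the torsion-freeness of $X_{*}(T)$ guarantees that the relevant $\operatorname{Tor}$ terms vanish, so that the tensor identification is compatible with cohomology.

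It then remains to identify $\hat{H}^{i-2}(G,X_{*}(T))$ with $\hat{H}^{2-i}(L/K,X^{*}(T))^{\vee}$. This is the cup-product duality for the finite group $G$ applied to $N=X^{*}(T)$ and its $\Z$-dual $N^{*}=X_{*}(T)$: the evaluation pairing $X_{*}(T)\tensor X^{*}(T)\to\Z$ induces a perfect pairing
$$\hat{H}^{i-2}(G,X_{*}(T))\times\hat{H}^{2-i}(G,X^{*}(T))\longrightarrow\hat{H}^{0}(G,\Z)=\Z/|G|\Z\hookrightarrow\Q/\Z,$$
which gives the asserted Pontryagin duality. To confirm that this duality is the one underlying the statement, I would carry out the cup-product bookkeeping: writing a class of $\hat{H}^{i}(G,T(L))$ in the form $\tilde{\alpha}\cup u$, associativity gives $(\tilde{\alpha}\cup u)\cup\beta=(\tilde{\alpha}\cup\beta)\cup u$ under the evaluation $T(L)\tensor X^{*}(T)\to L^{\times}$, so that pairing against $\beta\in\hat{H}^{2-i}(G,X^{*}(T))$ and applying $\inv$ amounts to sending $\tilde{\alpha}\cup\beta\in\hat{H}^{0}(G,\Z)$ through the injection $c\mapsto\inv(c\cup u)$, which is precisely $\Z/|G|\Z\hookrightarrow\Q/\Z$. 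Hence the $T$-pairing is perfect if and only if the finite-group pairing is.

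The only genuinely substantive input is local class field theory, namely the existence and the characterizing properties of the fundamental class $u$ that make $L^{\times}$ a class formation and thereby feed Tate's theorem; everything else is formal. The step requiring the most care is the last one: checking that the two evaluation pairings $X_{*}(T)\tensor X^{*}(T)\to\Z$ and $T(L)\tensor X^{*}(T)\to L^{\times}$ are compatible under the isomorphism $\cup u$, so that the perfect finite-group pairing transports to the cup-product pairing in the statement. Once this compatibility is in place, finiteness and the duality follow at once for every $i$.
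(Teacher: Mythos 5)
The paper itself gives no proof of this theorem---it is quoted as a known result from \cite{nsw} and \cite{plato}---and your argument is precisely the standard one found in those references: identify $T(L)\isom X_{*}(T)\tensor_{\Z}L^{\times}$ with $X_{*}(T)=\Hom(X^{*}(T),\Z)$, apply the Tate--Nakayama cup-product theorem for the class formation $L^{\times}$ with its fundamental class, and conclude with the perfect duality pairing $\hat{H}^{i-2}(G,X_{*}(T))\times\hat{H}^{2-i}(G,X^{*}(T))\to\hat{H}^{0}(G,\Z)\isom\Z/|G|\Z\subset\Q/\Z$ for finitely generated torsion-free modules. Your proof is correct as written, including the two points that genuinely require care (torsion-freeness of $X_{*}(T)$ for the tensor identification, and the associativity bookkeeping making the two pairings compatible), so there is nothing to add.
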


Since $L/K$ is a finite extension and $X^{*}(T)$ is a finitely-generated torsion-free $\Z[G_{L/K}]$-module, $\hat{H}^i(L/K,X^{*}(T))$ is finite for all $i$. It follows from Theorem \ref{t2} that, for all $i$, $\hat{H}^i(L/K,T)$ is finite as well. Furthermore, when combined with Theorem \ref{t1}, this implies that 

\begin{proposition}\label{p1}
For a local field $K$, $H^1(K,T)$ and $H^1(K,X^{*}(T))$ are finite.
\end{proposition}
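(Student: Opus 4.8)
The plan is to reduce both finiteness statements to the cohomology of the \emph{finite} group $G=G_{L/K}$, where finiteness is transparent, so that all the real content is carried by the preceding lemmas. For the character lattice I would start from Lemma \ref{l1a}, which supplies the isomorphism $H^1(K,X^{*}(T))\isom H^1(L/K,X^{*}(T))$. Since ordinary and Tate cohomology coincide in positive degree, the right-hand side is $\hat{H}^1(L/K,X^{*}(T))$, and this group was already observed to be finite in the discussion immediately preceding the proposition. So the $X^{*}(T)$ half is essentially a matter of citing Lemma \ref{l1a} and the remark after Theorem \ref{t2}.

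For the torus itself, the first step is to invoke Lemma \ref{l1}, which identifies $H^1(K,T)$ with $H^1(L/K,T(L))=\hat{H}^1(L/K,T)$. I would then apply the local Nakayama--Tate isomorphism of Theorem \ref{t2} with $i=1$, giving $\hat{H}^1(L/K,T)\isom\hat{H}^1(L/K,X^{*}(T))^{\vee}$. As the Pontryagin dual of a finite group is again finite, the finiteness of $H^1(K,T)$ then follows at once from that of $\hat{H}^1(L/K,X^{*}(T))$. Alternatively, one may bypass the duality step entirely and simply quote the remark following Theorem \ref{t2}, which already records that $\hat{H}^i(L/K,T)$ is finite for every $i$.

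The only genuine input is therefore the finiteness of $\hat{H}^i(L/K,X^{*}(T))$, which is where I would spend a sentence of care. Because $L/K$ is finite, $G$ is a finite group; because $X^{*}(T)\isom\Z^d$ is a finitely generated $\Z$-module, the Tate cohomology $\hat{H}^i(G,X^{*}(T))$ is a subquotient of finitely generated cochain groups and so is itself finitely generated over $\Z$. On the other hand, $\hat{H}^i(G,-)$ is annihilated by the order $|G|$, so this finitely generated abelian group is also torsion, hence finite. I do not anticipate any real obstacle here: every ingredient is either delivered by Lemmas \ref{l1} and \ref{l1a} and Theorem \ref{t2}, or is a standard structural property of the cohomology of finite groups, so the proof should amount to assembling these facts in the right order.
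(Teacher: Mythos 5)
Your proposal is correct and takes essentially the same route as the paper, which deduces the proposition from the finiteness of $\hat{H}^i(L/K,X^{*}(T))$ (asserted in the discussion preceding the statement), the duality isomorphism of Theorem \ref{t2}, and the identification of $H^1(K,T)$ and $H^1(K,X^{*}(T))$ with finite-group cohomology via Lemma \ref{l1} and Lemma \ref{l1a}. The only difference is that you spell out the standard fact the paper leaves implicit — that $\hat{H}^i(G,X^{*}(T))$ is finitely generated and annihilated by $|G|$, hence finite — which is a welcome but not substantively different addition.
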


Let us now consider a global field $K$. Let $\A_K$ denote the adele ring of $K$, $\I_K$ the id\`{e}le group of $K$, and $C_K:=\I_K/K^{\times}$ the id\`{e}le class group of $K$. Let $L/K$ be a finite Galois extension with $G:=\Gal(L/K)$. Then both $\A_L$ and $\I_L$ have the structure of a $G$-module, and
$$\A_L^G=\A_K,\,\,\,\,\I_L^G=\I_K.$$
Let $T$ be a $K$-torus split over $L$. Consider the exact sequence
$$1 \to L^{\times} \to \I_L \to C_L \to 1$$
The functor $\Hom(X^{*}(T),-)$ induces the exact sequence
$$1 \to T(L) \to T(\A_L) \to C_L(T) \to 1$$
of $G$-modules, where $T(\A_L)=\Hom(X^{*}(T),\I_L)$ is the adele ring of $T$ over $L$, and $C_L(T)=T(\A_L)/T(L)$ is the adele class group of $T$. We now state our second fundamental result which relates the global cohomology of $C_L(T)$ to that of $X^{*}(T)$ (\cite{nsw},\cite{plato}).

\begin{theorem}[Nakayama-Tate; Global version]\label{t3}
Given a global field $K$ and a $K$-torus $T$ with splitting field $L$, there is an isomorphism
$$\hat{H}^i(L/K,C_L(T))\isom{\hat{H}^{2-i}(L/K,X^{*}(T))}^{\vee}$$
for any $i$.
\end{theorem}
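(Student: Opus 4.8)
The plan is to deduce this global duality from two standard ingredients: the Tate--Nakayama theorem for the class formation given by the idele class group, together with the abstract duality for the Tate cohomology of a finite group acting on a finitely generated $\Z$-free module. Write $G=\Gal(L/K)$. The first move is to rewrite the coefficient module. Since $X^{*}(T)$ is finitely generated and $\Z$-free, the functor $\Hom(X^{*}(T),-)$ is exact, and for any $G$-module $A$ there is a natural $G$-isomorphism $\Hom(X^{*}(T),A)\isom X_{*}(T)\otimes_{\Z}A$, where $X_{*}(T):=\Hom(X^{*}(T),\Z)$ is the cocharacter lattice. Applied to $1\to L^{\times}\to\I_L\to C_L\to 1$, this identifies the exact sequence $1\to T(L)\to T(\A_L)\to C_L(T)\to 1$ with the tensor product of $X_{*}(T)$ against the idele class sequence; in particular $C_L(T)\isom X_{*}(T)\otimes_{\Z}C_L$ as $G$-modules.

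Next I would invoke global class field theory in the form of a class formation: for every intermediate field $K\subseteq M\subseteq L$ one has $\hat{H}^1(\Gal(L/M),C_L)=0$ and $\hat{H}^2(\Gal(L/M),C_L)$ cyclic of order $[L:M]$, generated compatibly by the canonical (fundamental) class $u_{L/K}\in\hat{H}^2(L/K,C_L)$ via the invariant map. With these axioms in hand, the Tate--Nakayama theorem applies to the $\Z$-free module $X_{*}(T)$: cup product with $u_{L/K}$ gives isomorphisms
\[
\hat{H}^{r}(L/K,X_{*}(T))\xrightarrow{\sim}\hat{H}^{r+2}(L/K,X_{*}(T)\otimes_{\Z}C_L)=\hat{H}^{r+2}(L/K,C_L(T))
\]
for every $r$. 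Taking $r=i-2$ yields $\hat{H}^{i-2}(L/K,X_{*}(T))\isom\hat{H}^{i}(L/K,C_L(T))$.

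It then remains to identify $\hat{H}^{i-2}(L/K,X_{*}(T))$ with ${\hat{H}^{2-i}(L/K,X^{*}(T))}^{\vee}$. This is the abstract duality for a finite group $G$ acting on a finitely generated $\Z$-free module $M$: the evaluation pairing $M\otimes_{\Z}\Hom(M,\Z)\to\Z$ induces a cup product $\hat{H}^{r}(G,M)\times\hat{H}^{-r}(G,\Hom(M,\Z))\to\hat{H}^{0}(G,\Z)$, and composing with the boundary isomorphism $\hat{H}^{0}(G,\Z)\isom\hat{H}^{-1}(G,\Q/\Z)\hra\Q/\Z$ arising from $0\to\Z\to\Q\to\Q/\Z\to 0$ (whose middle term has vanishing Tate cohomology) produces a perfect pairing of finite groups. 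Hence $\hat{H}^{r}(G,M)\isom{\hat{H}^{-r}(G,\Hom(M,\Z))}^{\vee}$. With $M=X_{*}(T)$, so that $\Hom(M,\Z)=X^{*}(T)$, and $r=i-2$, this gives $\hat{H}^{i-2}(L/K,X_{*}(T))\isom{\hat{H}^{2-i}(L/K,X^{*}(T))}^{\vee}$; combining with the previous step proves the theorem.

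The genuinely hard input is not the formal homological algebra but the class-formation property of $C_L$, i.e. the computation of $\hat{H}^1$ and $\hat{H}^2$ of the idele class group for every layer of $L/K$; this is exactly the content of global class field theory and would be imported rather than reproved here. I would also be careful about two bookkeeping points that are easy to get wrong: the Tate--Nakayama hypotheses must be verified for all subgroups of $G$ (equivalently, all intermediate fields), not merely for $G$ itself; and the duality step uses the $\Z$-dual $X_{*}(T)=\Hom(X^{*}(T),\Z)$ rather than the Pontryagin dual, the passage between the two being supplied precisely by the divisible resolution $0\to\Z\to\Q\to\Q/\Z\to 0$.
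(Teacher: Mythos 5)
Your proposal is correct, and it coincides with the proof the paper implicitly relies on: Theorem \ref{t3} is stated there without proof, with citations to \cite{nsw} and \cite{plato}, and the argument in those references is precisely yours --- identify $C_L(T)$ with $X_{*}(T)\otimes_{\Z}C_L$ using that $X^{*}(T)$ is finitely generated and $\Z$-free, apply the Tate--Nakayama theorem for the class formation $C_L$ (cup product with the fundamental class, hypotheses verified for all subgroups of $G$), and finish with the duality $\hat{H}^{r}(G,M)\isom{\hat{H}^{-r}(G,\Hom(M,\Z))}^{\vee}$ for finitely generated $\Z$-free $M$, noting $\Hom(X_{*}(T),\Z)\isom X^{*}(T)$. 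Your two cautionary points (subgroup-level hypotheses for Tate--Nakayama, and the passage between the $\Z$-dual and the Pontryagin dual via $0\to\Z\to\Q\to\Q/\Z\to 0$) are exactly the places where care is needed, and your degree bookkeeping ($r=i-2$) checks out.
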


Now let $v$ be any prime of $K$ and $w$ be a prime of $L$ extending $v$. We denote by $L_w$ and $K_v$ the corresponding completions. The global cohomology of $T(\A_L)$ can be expressed in terms of local cohomology of $T(L_w)$ as follows

\begin{proposition}\label{p2}
For a global field $K$, there is a direct sum decomposition
$$\hat{H}^i(L/K,T(\A_L))\isom\bigoplus_{v}\hat{H}^i(L_w/K_v,T(L_w))$$
\end{proposition}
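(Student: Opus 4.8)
The plan is to reduce the global computation to a semilocal one, place by place, exploiting the description $T(\A_L)=\Hom(X^{*}(T),\I_L)$ recorded above together with Shapiro's lemma. First I would record the restricted-product structure of the ideles: writing $S$ for a finite $G$-stable set of primes of $L$ containing the archimedean ones, one has $\I_L=\varinjlim_{S}\I_{L,S}$ with $\I_{L,S}=\prod_{w\in S}L_w^{\times}\times\prod_{w\notin S}\O_{L_w}^{\times}$. Since $X^{*}(T)\isom\Z^d$ is a finitely generated free $\Z$-module, the functor $\Hom(X^{*}(T),-)$ is exact and commutes with products and with the direct limit over $S$, so $T(\A_L)=\varinjlim_{S}\Hom(X^{*}(T),\I_{L,S})$, and the latter breaks up (grouping the primes $w$ by the prime $v$ of $K$ they lie over) into the semilocal pieces $\Hom(X^{*}(T),\prod_{w\mid v}L_w^{\times})$ and $\Hom(X^{*}(T),\prod_{w\mid v}\O_{L_w}^{\times})$.

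The key step is to recognize each semilocal factor as an induced module. Fixing a prime $v$ of $K$ and a prime $w\mid v$, the group $G$ permutes the primes above $v$ transitively with stabilizer the decomposition group $G_v=\Gal(L_w/K_v)$, so $\prod_{w\mid v}L_w^{\times}\isom\mathrm{Ind}_{G_v}^{G}L_w^{\times}$ and likewise for the unit groups. Applying the projection formula $\Hom(X^{*}(T),\mathrm{Ind}_{G_v}^{G}M)\isom\mathrm{Ind}_{G_v}^{G}\Hom(\Res_{G_v}X^{*}(T),M)$, valid because $X^{*}(T)$ is finitely generated and $\Z$-free, I obtain $\Hom(X^{*}(T),\prod_{w\mid v}L_w^{\times})\isom\mathrm{Ind}_{G_v}^{G}T(L_w)$. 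Shapiro's lemma then gives $\hat{H}^i(G,\mathrm{Ind}_{G_v}^{G}T(L_w))\isom\hat{H}^i(G_v,T(L_w))=\hat{H}^i(L_w/K_v,T(L_w))$, which is exactly the local term attached to $v$.

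It remains to assemble these contributions and pass to the limit. Because $\hat{H}^{\bullet}(G,-)$ commutes with the direct limit over $S$ and with finite direct sums, $\hat{H}^i(L/K,T(\A_L))$ equals the limit over $S$ of $\bigoplus_{v\in S_K}\hat{H}^i(L_w/K_v,T(L_w))$ together with the unit terms $\hat{H}^i(G_v,\Hom(\Res_{G_v}X^{*}(T),\O_{L_w}^{\times}))$ for $v\notin S_K$, where $S_K$ denotes the set of primes of $K$ below $S$. The main obstacle, and the point that makes the direct sum over all $v$ meaningful, is showing that the unit contribution vanishes at every prime $v$ unramified in $L$: for such $v$ the extension $L_w/K_v$ is unramified, so $\O_{L_w}^{\times}$ is a cohomologically trivial $G_v$-module, and since $X^{*}(T)$ is $\Z$-free it follows that $\Hom(\Res_{G_v}X^{*}(T),\O_{L_w}^{\times})\isom T(\O_{L_w})$ is cohomologically trivial as well, whence $\hat{H}^i(G_v,T(\O_{L_w}))=0$. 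Granting this vanishing, the unit terms drop out of the limit, the system stabilizes, and one is left with $\hat{H}^i(L/K,T(\A_L))\isom\bigoplus_{v}\hat{H}^i(L_w/K_v,T(L_w))$, as claimed.
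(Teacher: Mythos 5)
The paper gives no proof of this proposition at all: it is stated as a known result, implicitly referred to the sources (\cite{nsw}, \cite{plato}) cited for the surrounding Nakayama--Tate theorems. So there is nothing in the paper to compare against step by step; what matters is whether your argument stands on its own, and it does. It is in fact the standard proof from those references: write $\I_L$ as the filtered colimit of the $\I_{L,S}$, use that $\Hom(X^{*}(T),-)$ is exact and commutes with products and colimits because $X^{*}(T)\isom\Z^d$, identify each semilocal factor $\Hom\bigl(X^{*}(T),\prod_{w\mid v}L_w^{\times}\bigr)$ with $\mathrm{Ind}_{G_v}^{G}T(L_w)$ via the projection formula, apply Shapiro's lemma, and kill the unit contributions at unramified primes. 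Two points deserve slightly more justification than you give them. First, the passage of $\hat{H}^i(G,-)$ through the colimit and through the infinite product of unit groups is legitimate precisely because $G$ is finite, so that $\Z$ admits a complete resolution by finitely generated free $\Z[G]$-modules; this is worth saying, since Tate cohomology does not commute with infinite products for general coefficients over profinite groups. Second, your claim that cohomological triviality of $\O_{L_w}^{\times}$ forces cohomological triviality of $\Hom\bigl(X^{*}(T),\O_{L_w}^{\times}\bigr)$ is true but is a real step: writing $\Hom_{\Z}(A,M)\isom A^{\vee}\otimes_{\Z}M$ with $A=X^{*}(T)$ finitely generated and $\Z$-free, one takes a length-one projective resolution of $M$ over $\Z[G_v]$ (which exists exactly because $M$ is cohomologically trivial) and tensors with $A^{\vee}$, using that $N\otimes_{\Z}\Z[G_v]$ with diagonal action is free; alternatively one can invoke Nakayama's criterion on Sylow subgroups. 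With those two remarks supplied, the proof is complete and correct.
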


The exact sequence 
$$1 \to T(L) \to T(\A_L) \to C_L(T) \to 1$$
of $G$-modules induces a corresponding exact sequence of cohomology groups
$$\hat{H}^{i-1}(L/K,C_L(T)) \to \hat{H}^i(L/K,T(L)) \xrightarrow{\varphi_i} \hat{H}^i(L/K,T(\A_L)) \to \hat{H}^i(L/K,C_L(T))$$
The remarks following Theorem \ref{t2} apply in the present situation as well, and we conclude that $\hat{H}^i(L/K,C_L(T))$ is finite for all $i$. It follows that the map $\varphi_i$ above has a finite kernel and cokernel, i.e. $\varphi_i$ is a \emph{quasi-isomorphism} for all $i$. Using Proposition \ref{p2}, we get

\begin{proposition}\label{p3}
For a global field $K$, the group $P^i(L/K,T)$ defined by the exactness of the sequence
$$1 \to P^i(L/K,T) \to \hat{H}^i(L/K,T(L)) \to \prod_{v}\hat{H}^i(L_w/K_v,T(L_w))$$
is finite for all $i$.
\end{proposition}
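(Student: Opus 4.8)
The plan is to identify the localization map appearing in the definition of $P^i(L/K,T)$ with the map $\varphi_i$ from the long exact cohomology sequence, and then to exploit the finiteness of the Tate cohomology of $C_L(T)$.

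First I would observe that the diagonal embedding $T(L)\hra T(\A_L)$ induces, by functoriality of Tate cohomology, precisely the map $\varphi_i\colon\hat{H}^i(L/K,T(L))\to\hat{H}^i(L/K,T(\A_L))$. Composing with the decomposition of Proposition \ref{p2},
$$\hat{H}^i(L/K,T(\A_L))\isom\bigoplus_{v}\hat{H}^i(L_w/K_v,T(L_w)),$$
one checks that $\varphi_i$ becomes the simultaneous localization map sending a global class to the collection of its restrictions at the places $v$. Since a global cohomology class restricts to zero at all but finitely many places, this map lands in the direct sum, which is a subgroup of $\prod_{v}\hat{H}^i(L_w/K_v,T(L_w))$. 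As the inclusion $\bigoplus_{v}\hra\prod_{v}$ is injective, passing to kernels gives the identification $P^i(L/K,T)=\Ker\varphi_i$.

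Next I would recall that the short exact sequence $1\to T(L)\to T(\A_L)\to C_L(T)\to 1$ produces the long exact sequence already displayed, whose relevant portion reads
$$\hat{H}^{i-1}(L/K,C_L(T))\xrightarrow{\psi}\hat{H}^i(L/K,T(L))\xrightarrow{\varphi_i}\hat{H}^i(L/K,T(\A_L)).$$
By exactness, $\Ker\varphi_i=\im\psi$, so $\Ker\varphi_i$ is a quotient of $\hat{H}^{i-1}(L/K,C_L(T))$. The remarks following Theorem \ref{t3} guarantee that $\hat{H}^{i-1}(L/K,C_L(T))$ is finite, whence $\Ker\varphi_i$ is finite; equivalently, $\varphi_i$ is a quasi-isomorphism. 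Combining this with the identification $P^i(L/K,T)=\Ker\varphi_i$ yields the asserted finiteness for all $i$.

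The step I expect to require the most care is the identification of $\varphi_i$ with the localization map under Proposition \ref{p2}: one must verify that the decomposition of $\hat{H}^i(L/K,T(\A_L))$ is genuinely compatible with restriction to the completions, and that the image of any single global class is supported at only finitely many places, so that the distinction between $\bigoplus_{v}$ and $\prod_{v}$ is immaterial for the kernel. Once this compatibility is secured, the finiteness follows at once from the quasi-isomorphism property of $\varphi_i$.
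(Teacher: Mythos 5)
Your proof is correct and takes essentially the same route as the paper: identify the map defining $P^i(L/K,T)$ with $\varphi_i$ via the decomposition of Proposition \ref{p2}, and use exactness to exhibit $\ker\varphi_i$ as a quotient (the image) of the finite group $\hat{H}^{i-1}(L/K,C_L(T))$, whose finiteness comes from the global Nakayama--Tate theorem. Your explicit attention to the direct sum versus product distinction and to the compatibility of $\varphi_i$ with localization simply spells out what the paper leaves implicit in the phrase ``Using Proposition \ref{p2}, we get.''
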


Note that $P^i(L/K,T)=\ker(\varphi_i)$ is also defined by the exactness of the sequence
$$\hat{H}^{i-1}(L/K,T(\A_L))\to\hat{H}^{i-1}(L/K,C_L(T))\to P^i(L/K,T)\to 1$$
which, using Proposition \ref{p2}, may also be given as
$$\bigoplus_{v}\hat{H}^{i-1}(L_w/K_v,T(L_w))\to\hat{H}^{i-1}(L/K,C_L(T))\to P^i(L/K,T)\to 1$$
Dualizing this sequence and using Theorem \ref{t2} and Theorem \ref{t3}, we obtain the following theorem of Tate

\begin{theorem}[Tate]\label{t4}
There is an exact sequence
$$1 \to P^i(L/K,T)^{\vee} \to \hat{H}^{3-i}(L/K,X^{*}(T)) \to \bigoplus_{v}\hat{H}^{3-i}(L_w/K_v,X^{*}(T))$$
\end{theorem}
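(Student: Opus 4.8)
The plan is to obtain the asserted sequence by applying the Pontryagin dual functor $(-)^{\vee}=\Hom(-,\Q/\Z)$ to the exact sequence
$$\bigoplus_{v}\hat{H}^{i-1}(L_w/K_v,T(L_w))\to\hat{H}^{i-1}(L/K,C_L(T))\to P^i(L/K,T)\to 1$$
displayed just before the statement. Every group appearing there is finite: $\hat{H}^{i-1}(L/K,C_L(T))$ is finite by the remark following Theorem \ref{t3}, each local term $\hat{H}^{i-1}(L_w/K_v,T(L_w))$ is finite by the local theory underlying Theorem \ref{t2}, and $P^i(L/K,T)$ is finite by Proposition \ref{p3}. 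Since $(-)^{\vee}$ is an exact contravariant functor on the category of finite abelian groups, dualizing this right-exact sequence yields the left-exact sequence
$$1 \to P^i(L/K,T)^{\vee} \to \hat{H}^{i-1}(L/K,C_L(T))^{\vee} \to \Big(\bigoplus_{v}\hat{H}^{i-1}(L_w/K_v,T(L_w))\Big)^{\vee}.$$

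Next I would rewrite the two right-hand terms using the Nakayama--Tate dualities. The global version (Theorem \ref{t3}), applied with $i$ replaced by $i-1$, gives $\hat{H}^{i-1}(L/K,C_L(T))\isom \hat{H}^{3-i}(L/K,X^{*}(T))^{\vee}$; as these are finite, dualizing once more produces $\hat{H}^{i-1}(L/K,C_L(T))^{\vee}\isom \hat{H}^{3-i}(L/K,X^{*}(T))$. Likewise the local version (Theorem \ref{t2}) gives, for each $v$, an isomorphism $\hat{H}^{i-1}(L_w/K_v,T(L_w))^{\vee}\isom \hat{H}^{3-i}(L_w/K_v,X^{*}(T))$. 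Because the dual of a direct sum is the product of the duals, the third term becomes $\prod_v \hat{H}^{3-i}(L_w/K_v,X^{*}(T))$; but $\hat{H}^{3-i}(L_w/K_v,X^{*}(T))$ vanishes for all but finitely many $v$ (the primes unramified in $L/K$, where the cohomology of the torsion-free unramified module is trivial), so this product coincides with the direct sum $\bigoplus_v \hat{H}^{3-i}(L_w/K_v,X^{*}(T))$. Substituting these identifications turns the dualized sequence into exactly the one in the statement.

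The step I expect to be the main obstacle is not the formal dualization but the verification that the resulting map
$$\hat{H}^{3-i}(L/K,X^{*}(T)) \to \bigoplus_v \hat{H}^{3-i}(L_w/K_v,X^{*}(T))$$
is genuinely the natural localization (restriction) map, and not merely some abstract homomorphism. This amounts to checking that the Nakayama--Tate isomorphisms of Theorems \ref{t2} and \ref{t3} are compatible with localization: concretely, that the diagram relating the global duality, the product of the local dualities, and the map $\hat{H}^{i-1}(L/K,T(\A_L)) \to \hat{H}^{i-1}(L/K,C_L(T))$ induced by $T(\A_L)\to C_L(T)$ (which, after the identification of Proposition \ref{p2}, is the assembly map $\bigoplus_v\hat{H}^{i-1}(L_w/K_v,T(L_w))\to\hat{H}^{i-1}(L/K,C_L(T))$) commutes up to sign. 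This functoriality is the heart of Poitou--Tate duality; I would invoke it from the standard references (\cite{nsw}) rather than re-derive it. Once it is in hand, the dual of the assembly map is identified with restriction, and the proof is complete.
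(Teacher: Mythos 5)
Your core argument is exactly the paper's: the paper proves Theorem \ref{t4} precisely by dualizing the displayed right-exact sequence
$$\bigoplus_{v}\hat{H}^{i-1}(L_w/K_v,T(L_w))\to\hat{H}^{i-1}(L/K,C_L(T))\to P^i(L/K,T)\to 1$$
and substituting via Theorems \ref{t2} and \ref{t3}, which is what you do. Your closing observation---that one must check the Nakayama--Tate isomorphisms are compatible with the localization maps, so that the dual of the assembly map really is restriction---is a genuine point that the paper passes over in silence, and invoking \cite{nsw} for it is reasonable.

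There is, however, one concretely false step in your write-up: the claim that $\hat{H}^{3-i}(L_w/K_v,X^{*}(T))$ vanishes for all but finitely many $v$ (``the primes unramified in $L/K$''). Take $T=\G_m$, so $X^{*}(T)=\Z$ with trivial action, and $i=1$: then $\hat{H}^{2}(L_w/K_v,\Z)\isom\Hom(\Gal(L_w/K_v),\Q/\Z)$, which is nonzero whenever the decomposition group is nontrivial, and by Chebotarev this happens for infinitely many unramified $v$. The vanishing you have in mind is the triviality of unramified \emph{unit} cohomology, $\hat{H}^{j}(L_w/K_v,\O_{L_w}^{\times})=1$ for $v$ unramified; that is what makes Proposition \ref{p2} a direct sum on the torus side, but it says nothing about the character module $X^{*}(T)$. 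Consequently the dualized map genuinely lands in $\prod_{v}\hat{H}^{3-i}(L_w/K_v,X^{*}(T))$ and in general does \emph{not} factor through the direct sum: in the example above, a nontrivial character of $G$ restricts nontrivially to infinitely many decomposition groups. So what dualization honestly proves is the statement with the product in the third slot. Since exactness at the middle term concerns only the kernel of the localization map, the actual content of the theorem (and of Corollary \ref{c1}, which uses only that kernel) is unaffected; the $\bigoplus_v$ in the statement is an abuse of notation that the paper itself commits without comment. Your error is only in manufacturing a justification for the direct sum that does not hold---the correct move is either to replace $\bigoplus_v$ by $\prod_v$ or to flag the identification as notational.
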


We now define the Shafarevich-Tate group of $T$, denoted by $\Sha(T/K)$, by the exactness of the sequence
$$1 \to \Sha(T/K) \to H^1(K,T) \to \prod_{v}H^1(K_v,T)$$
where the product is over all primes of $K$.

\begin{proposition}\label{p4}
$\Sha(T/K)$ is finite for a global field $K$.
\end{proposition}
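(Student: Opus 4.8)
The plan is to identify $\Sha(T/K)$ with the group $P^1(L/K,T)$ introduced in Proposition \ref{p3}, whose finiteness has already been established. Fix a finite Galois extension $L/K$ that splits $T$ and set $G=\Gal(L/K)$. By Lemma \ref{l1} we have $H^1(K,T)=H^1(L/K,T(L))$, and since the first cohomology group of a finite group coincides with its Tate cohomology, this reads $H^1(K,T)=\hat{H}^1(L/K,T(L))$. The same argument, applied to the local field $K_v$ and the torus $T$ which is split over $L_w$, gives $H^1(K_v,T)=\hat{H}^1(L_w/K_v,T(L_w))$ for every prime $v$ of $K$ and a chosen prime $w\mid v$ of $L$.

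Under these identifications, the defining sequence for $\Sha(T/K)$ exhibits it as the kernel of the map $H^1(K,T)\to\prod_{v}H^1(K_v,T)$, which now becomes the kernel of $\hat{H}^1(L/K,T(L))\to\prod_{v}\hat{H}^1(L_w/K_v,T(L_w))$. This last kernel is precisely $P^1(L/K,T)$, the case $i=1$ of Proposition \ref{p3} (recall that the product map there is $\varphi_1$ followed by the decomposition of Proposition \ref{p2}). Granting that the two product maps agree, we obtain $\Sha(T/K)\isom P^1(L/K,T)$, and the finiteness of $\Sha(T/K)$ follows immediately from Proposition \ref{p3}.

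The step requiring care, and the main point to verify, is that the arithmetic localization map $H^1(K,T)\to H^1(K_v,T)$ corresponds, under the inflation isomorphisms of Lemma \ref{l1}, to the natural restriction map $\hat{H}^1(L/K,T(L))\to\hat{H}^1(L_w/K_v,T(L_w))$ on finite group cohomology. This is a compatibility of the Hochschild--Serre spectral sequences for the pairs $(G_K,G_L)$ and $(G_{K_v},G_{L_w})$ with respect to the homomorphism $G_{K_v}\to G_K$ induced by a choice of embedding $\Kbar\hra\overline{K_v}$: the decomposition group of $w$ surjects onto $\Gal(L_w/K_v)$, and functoriality of inflation along this map produces the required commuting square. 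Once this compatibility is in place, the identification of the product map appearing in the definition of $\Sha(T/K)$ with the one in Proposition \ref{p3}, via the direct sum decomposition of Proposition \ref{p2}, is immediate, and the desired finiteness follows.
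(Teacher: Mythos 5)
Your proof is correct and follows essentially the same route as the paper: both identify $\Sha(T/K)$ with $P^1(L/K,T)$ via the inflation isomorphisms of Lemma \ref{l1} applied over $K$ and over each $K_v$, and then invoke the finiteness of $P^1(L/K,T)$ from Proposition \ref{p3}. The only difference is that you spell out the compatibility of the arithmetic localization maps with inflation (the commutativity of the comparison diagram), a point the paper's proof asserts implicitly when it says the vertical inflation maps ``induce'' the left-hand map.
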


\begin{proof}
The exact sequences defining $P^1(L/K,T)$ and $\Sha(T/K)$ fit into a commutative diagram
\[
\xymatrix{
1\ar[r] &P^1(L/K,T)\ar[r]\ar[d] &H^1(L/K,T(L))\ar[r]\ar[d] &\prod_{v}H^1(L_w/K_v,T(L_w))\ar[d]\\
1\ar[r] &\Sha(T/K)\ar[r] &H^1(K,T)\ar[r] &\prod_{v}H^1(K_v,T)}
\]
where the middle and right vertical arrows are inflation maps, inducing the left vertical map. However, $T$ is split over $L$ and hence, over each $L_w$. By Lemma \ref{l1},
both the middle and the right vertical arrows are isomorphisms. Consequently, $\Sha(T/K)\isom P^1(L/K,T)$ and the latter group is finite by Proposition \ref{p3}.
\end{proof}

Since $\Sha(T/K)\isom P^1(L/K,T)$ and since any finite group is isomorphic to its Pontryagin dual, Theorem \ref{t4} implies that

\begin{corollary}\label{c1}
There is an exact sequence
$$1 \to \Sha(T/K) \to H^2(L/K,X^{*}(T)) \to \bigoplus_{v}H^2(L_w/K_v,X^{*}(T))$$
\end{corollary}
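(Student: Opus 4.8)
The plan is to obtain the desired sequence directly by specializing Tate's exact sequence (Theorem \ref{t4}) to the case $i=1$ and then rewriting each of its three terms into the form appearing in the statement. Thus I would set $i=1$ in Theorem \ref{t4}, which (since $3-i=2$) yields the exact sequence
$$1 \to P^1(L/K,T)^{\vee} \to \hat{H}^{2}(L/K,X^{*}(T)) \to \bigoplus_{v}\hat{H}^{2}(L_w/K_v,X^{*}(T)).$$
The rest of the argument is a matter of identifying the three terms.

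For the leading term, I would invoke Proposition \ref{p4}, which already establishes the isomorphism $\Sha(T/K)\isom P^1(L/K,T)$. Since $P^1(L/K,T)$ is finite (by Proposition \ref{p3}), and since any finite abelian group is isomorphic to its Pontryagin dual, we obtain $P^1(L/K,T)^{\vee}\isom P^1(L/K,T)\isom\Sha(T/K)$. Substituting this places $\Sha(T/K)$ in the leading position, exactly as required.

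For the middle and right-hand terms, I would use the standard fact that Tate cohomology coincides with ordinary group cohomology in strictly positive degrees, so that $\hat{H}^2(L/K,X^{*}(T))=H^2(L/K,X^{*}(T))$ and likewise $\hat{H}^2(L_w/K_v,X^{*}(T))=H^2(L_w/K_v,X^{*}(T))$ for every prime $v$. Making these substitutions turns the sequence above into the claimed exact sequence, and exactness is inherited verbatim from Theorem \ref{t4}.

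Since essentially all the content has been front-loaded into Theorem \ref{t4} and Proposition \ref{p4}, there is no substantial obstacle here; the only point requiring any care is that the self-duality $P^1(L/K,T)^{\vee}\isom P^1(L/K,T)$ is non-canonical. As the corollary asserts merely the \emph{existence} of an exact sequence rather than any naturality statement, this non-canonicity causes no difficulty, and the proof reduces to the bookkeeping described above.
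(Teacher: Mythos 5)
Your proof is correct and follows exactly the paper's own route: specializing Theorem \ref{t4} at $i=1$, identifying $P^1(L/K,T)^{\vee}\isom P^1(L/K,T)\isom\Sha(T/K)$ via Proposition \ref{p4} and the self-duality of finite abelian groups, and noting that Tate cohomology agrees with ordinary cohomology in positive degrees. No issues.
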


We now state another useful characterization of $\Sha(T/K)$.

\begin{proposition}\label{p5}
There is an isomorphism
$$\Sha(T/K)\isom C_L(T)^G/C_K(T)$$
In particular, $\Sha(T/K)$ measures the failure of Galois descent for $C_L(T)$.
\end{proposition}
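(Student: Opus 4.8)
The plan is to extract $\Sha(T/K)$ directly from the short exact sequence
$$1 \to T(L) \to T(\A_L) \to C_L(T) \to 1$$
of $G$-modules by passing to $G$-cohomology, reading off the quotient $C_L(T)^G/C_K(T)$ along the way. The guiding observation is that, by Proposition \ref{p4}, $\Sha(T/K)\isom P^1(L/K,T)$, and that $P^1(L/K,T)$ is by definition the kernel of the map $\varphi_1\colon H^1(L/K,T(L)) \to H^1(L/K,T(\A_L))$ induced by $T(L)\hra T(\A_L)$: indeed, in degree one $\hat{H}^1 = H^1$, and Proposition \ref{p2} identifies $H^1(L/K,T(\A_L))$ with $\bigoplus_v H^1(L_w/K_v,T(L_w))$ compatibly with the restriction maps. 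Thus it suffices to realize this kernel as $C_L(T)^G/C_K(T)$.

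First I would write down the long exact cohomology sequence of the displayed short exact sequence, beginning in degree zero:
$$1 \to T(K) \to T(\A_K) \to C_L(T)^G \xrightarrow{\partial} H^1(L/K,T(L)) \xrightarrow{\varphi_1} H^1(L/K,T(\A_L)).$$
Here I use Galois descent in the forms $T(L)^G = T(K)$ and $T(\A_L)^G = T(\A_K)$, which are the analogues for $T$ of the identities $\A_L^G = \A_K$ and $\I_L^G = \I_K$ recorded above (they follow by applying the left-exact functor $\Hom(X^{*}(T),-)$ and using those identities on $L^{\times}$ and $\I_L$).

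Next I would identify the image of $T(\A_K)$ in $C_L(T)^G$. The map $T(\A_K) \to C_L(T)^G$ is the restriction of the projection $T(\A_L) \to C_L(T)$, whose kernel is $T(L)$, so its kernel is $T(\A_K)\intersect T(L) = T(L)^G = T(K)$. Hence its image is $T(\A_K)/T(K) = C_K(T)$, viewed as a subgroup of $C_L(T)^G$. By exactness this image is exactly $\ker\partial$, so $\partial$ induces an isomorphism
$$C_L(T)^G/C_K(T) \isom \im\partial = \ker\varphi_1 = \ker\left(H^1(L/K,T(L)) \to H^1(L/K,T(\A_L))\right).$$

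Finally I would combine this with the identifications above: the right-hand kernel is precisely $P^1(L/K,T)$ (Propositions \ref{p3} and \ref{p2}), which is isomorphic to $\Sha(T/K)$ (Proposition \ref{p4}), yielding $\Sha(T/K) \isom C_L(T)^G/C_K(T)$. The main obstacle is bookkeeping rather than depth: one must verify that the connecting map $\partial$ lands in $H^1(L/K,T(L))$ and that the terminal kernel $\ker\varphi_1$ coincides with $P^1(L/K,T)$ rather than some a priori larger object. This is exactly where the direct-sum decomposition of Proposition \ref{p2} and the isomorphism $\Sha(T/K)\isom P^1(L/K,T)$ of Proposition \ref{p4} are indispensable, and where care is needed to ensure the restriction-to-completions maps match on both sides.
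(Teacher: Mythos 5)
Your proposal is correct and follows essentially the same route as the paper: the paper's proof likewise takes $G$-cohomology of $1 \to T(L) \to T(\A_L) \to C_L(T) \to 1$ to obtain the exact sequence $1 \to T(K) \to T(\A_K) \to C_L(T)^G \to P^1(L/K,T) \to 1$, and then invokes $C_K(T)\isom T(\A_K)/T(K)$ together with $\Sha(T/K)\isom P^1(L/K,T)$ from Proposition \ref{p4}. Your write-up simply makes explicit the bookkeeping (Galois descent for $T(L)$ and $T(\A_L)$, the identification $\ker\varphi_1 = P^1(L/K,T)$ via Proposition \ref{p2}) that the paper leaves implicit.
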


\begin{proof}
The exact sequence
$$1 \to T(L) \to T(\A_L) \to C_L(T) \to 1$$
of $G$-modules yields the exact sequence
$$1 \to T(K) \to T(\A_K) \to C_L(T)^G \to P^1(L/K,T) \to 1$$
The requisite isomorphism now follows immediately since we have that $C_K(T)\isom T(\A_K)/T(K)$ and $\Sha(T/K)\isom P^1(L/K,T)$.
\end{proof}

\section{Class field theory}
Let $K$ be a global field and $K_v$ the completion of $K$ at a prime $v$. There is a \emph{local invariant map} $\inv_v : \Br(K_v)\to\Q/\Z$ for each $v$ such that $\inv_v$ is an isomorphism. A fundamental result of global class field theory is

\begin{theorem}\label{tcft}
There is an exact sequence
$$1 \to \Br(K) \to \bigoplus_{v}\Br(K_v) \xrightarrow{\sum_v\inv_v} \Q/\Z \to 1$$
where $\sum_{v}\inv_v$ is the sum of the local invariant maps.
\end{theorem}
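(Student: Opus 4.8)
The plan is to obtain the sequence as the direct limit, over finite Galois extensions $L/K$, of the $H^2$-portion of the long exact cohomology sequence attached to the fundamental short exact sequence
$$1 \to L^{\times} \to \I_L \to C_L \to 1$$
of $G = \Gal(L/K)$-modules --- that is, precisely the sequence already used in Proposition \ref{p5}, now specialized to the split torus $\G_m$ (for which $L^{\times}=\G_m(L)$, $\I_L=\G_m(\A_L)$ and $C_L$ is the corresponding adele class group). First I would record the three standard inputs of global class field theory, each of which I would cite from \cite{nsw} or \cite{plato}: (i) the local--global decomposition $H^2(L/K,\I_L)\isom\bigoplus_v H^2(L_w/K_v,L_w^{\times})=\bigoplus_v\Br(L_w/K_v)$, arising via Shapiro's lemma from the fact that the idele group is built from modules induced from the decomposition groups at the places of $K$, and compatible with the local invariant maps $\inv_v$; (ii) the vanishing $H^1(L/K,C_L)=0$; and (iii) the global fundamental class, namely that $H^2(L/K,C_L)$ is cyclic of order $[L:K]$ and that an invariant map $\inv_{L/K}$ identifies it with $\tfrac{1}{[L:K]}\Z/\Z\subset\Q/\Z$, compatibly with inflation as $L$ grows.

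With these in hand, the long exact sequence reads
$$H^1(L/K,C_L)\to H^2(L/K,L^{\times})\to H^2(L/K,\I_L)\to H^2(L/K,C_L).$$
By (ii) the first term vanishes, so $\Br(L/K)=H^2(L/K,L^{\times})$ injects into $\bigoplus_v\Br(L_w/K_v)$ with image equal to the kernel of the map into $H^2(L/K,C_L)$; by (i) and (iii) this map is identified with $\sum_v\inv_v$ on the subgroup of invariants of order dividing $[L:K]$. Taking the direct limit over all finite Galois $L/K$, under which $\varinjlim\Br(L/K)=\Br(K)$, $\varinjlim\bigoplus_v\Br(L_w/K_v)=\bigoplus_v\Br(K_v)$, and $\varinjlim\tfrac{1}{[L:K]}\Z/\Z=\Q/\Z$, then yields exactness of
$$1\to\Br(K)\to\bigoplus_v\Br(K_v)\xrightarrow{\sum_v\inv_v}\Q/\Z.$$
Surjectivity of the final map is immediate, since already for a single place $v$ the local invariant $\inv_v\colon\Br(K_v)\to\Q/\Z$ is an isomorphism.

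The genuine content, and the step I expect to be the main obstacle, is input (iii): the existence of the global fundamental class together with the compatibility of the invariant maps. This encodes Artin reciprocity, and in particular the vanishing of $\sum_v\inv_v$ on classes descending from $\Br(K)$. Its proof requires computing the order of $H^2(L/K,C_L)$ through the first and second inequalities (equivalently, the analytic input on prime densities) and then pinning down the canonical generator; I would invoke this wholesale from \cite{nsw} or \cite{plato} rather than reprove it. It is worth noting that this is the very fundamental class underlying the global Nakayama--Tate duality of Theorem \ref{t3}, so the present theorem and that duality rest on a common foundation; and that the injectivity half, taken alone, is exactly the Albert--Brauer--Hasse--Noether theorem that a central simple $K$-algebra splitting at every completion is already split over $K$.
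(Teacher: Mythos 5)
Your proposal cannot be compared against a proof in the paper, because the paper does not prove this statement: Theorem \ref{tcft} is quoted as a ``fundamental result of global class field theory,'' with the proof implicitly delegated to the references (\cite{nsw}, \cite{gcft}). What you have written is the standard derivation found there (e.g.\ \cite{nsw}, Ch.~VIII): take the $G$-cohomology of $1\to L^{\times}\to\I_L\to C_L\to 1$, use the Shapiro decomposition $H^2(L/K,\I_L)\isom\bigoplus_v\Br(L_w/K_v)$, the vanishing $H^1(L/K,C_L)=1$, and the fundamental class identifying $H^2(L/K,C_L)$ with $\frac{1}{[L:K]}\Z/\Z$ compatibly with the local invariants, then pass to the direct limit. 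This is correct in structure, and you have black-boxed exactly the right things: the construction of the global invariant map (equivalently the reciprocity law and the two inequalities) is the deep content, and citing it wholesale is what any honest proof of this theorem does. Your remark that the same fundamental class underlies the Nakayama--Tate duality of Theorem \ref{t3} is also accurate and consistent with how the paper sources both results.

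Two points in your limit step deserve more care. First, the identification $\varinjlim_L\bigoplus_v\Br(L_w/K_v)=\bigoplus_v\Br(K_v)$ is not formal: it needs the existence, for any finite collection of local Brauer classes, of a single global Galois extension $L/K$ whose completions have degrees divisible by the orders of those classes (for number fields this is done with cyclotomic extensions; for function fields with constant field extensions). This simultaneous-splitting lemma is itself one of the nontrivial steps of the classical argument and should be cited alongside the fundamental class rather than absorbed silently into the limit. Second, at archimedean places $\inv_v$ is not an isomorphism ($\Br(\R)=\frac{1}{2}\Z/\Z$, $\Br(\C)=0$) --- a point the paper's own preamble to the theorem also glosses over --- though this affects neither exactness nor your surjectivity argument, which only requires a single non-archimedean place.
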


The exact sequence in Theorem \ref{tcft} is known as the \emph{fundamental exact sequence} of global class field theory for $K$.
Let $L/K$ be a finite Galois extension with $G_{L/K}:=\Gal(L/K)$. Let $w$ be a prime of $L$ extending $v$, and $L_w$ the corresponding completion. 

\begin{lemma}\label{l3}
Suppose that $L/K$ is a finite cyclic extension. There is an exact sequence
$$1 \to \Br(L/K) \to \bigoplus_{v}\Br(L_w/K_v) \to C_K/\Nm{C_L} \to 1$$
where $\Nm$ is the norm map associated to $L/K$.
\end{lemma}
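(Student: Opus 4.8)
The plan is to recognize this lemma as the ``single-layer'' refinement of the fundamental exact sequence of Theorem \ref{tcft}, and to extract it from the long exact cohomology sequence attached to $1 \to L^{\times} \to \I_L \to C_L \to 1$, using cyclicity to kill the outer terms. Write $G = \Gal(L/K)$ and $G_w = \Gal(L_w/K_v)$. First I would reduce the two relative Brauer groups to degree-two group cohomology: inflation-restriction in degree two together with Hilbert's Theorem 90 (exactly as in the reasoning around Lemma \ref{l1a}) gives $\Br(L/K) \isom H^2(G,L^{\times})$ globally, and its local analogue gives $\Br(L_w/K_v) \isom H^2(G_w,L_w^{\times})$. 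The crucial structural input is then the local-global decomposition of idelic cohomology, which is precisely Proposition \ref{p2} applied to the split torus $\G_m$:
$$H^2(G,\I_L) \isom \bigoplus_{v} H^2(G_w,L_w^{\times}) = \bigoplus_{v}\Br(L_w/K_v).$$
Thus the three-term sequence in the statement is literally a stretch of the long exact sequence of $1 \to L^{\times} \to \I_L \to C_L \to 1$, whose third term is $H^2(G,C_L)$.

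With these identifications in place, I would establish exactness of
$$H^1(G,C_L)\to H^2(G,L^{\times})\xrightarrow{\ \alpha\ } H^2(G,\I_L)\xrightarrow{\ \beta\ } H^2(G,C_L)\xrightarrow{\ \partial\ } H^3(G,L^{\times})$$
from its two ends. Injectivity of $\alpha$ (equivalently of $\Br(L/K)\to\bigoplus_v\Br(L_w/K_v)$) I would deduce directly from Theorem \ref{tcft}: the composite of this map with the inclusion $\bigoplus_v\Br(L_w/K_v)\hookrightarrow\bigoplus_v\Br(K_v)$ is the restriction to $\Br(L/K)\subseteq\Br(K)$ of the injective localization map $\Br(K)\hookrightarrow\bigoplus_v\Br(K_v)$, so $\alpha$ is injective as well. (Alternatively one may invoke the class field theory vanishing $H^1(G,C_L)=1$.) Surjectivity of $\beta$ onto $H^2(G,C_L)$ is where cyclicity enters decisively: it amounts to $\partial=0$, and for cyclic $G$ the periodicity of Tate cohomology gives $H^3(G,L^{\times})\isom\hat{H}^3(G,L^{\times})\isom\hat{H}^1(G,L^{\times})=0$ by Hilbert 90. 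This yields the short exact sequence $1 \to \Br(L/K) \to \bigoplus_v\Br(L_w/K_v) \to H^2(G,C_L)\to 1$.

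It then remains to identify the cokernel $H^2(G,C_L)$ with $C_K/\Nm(C_L)$. Invoking cyclic periodicity once more, $H^2(G,C_L)=\hat{H}^2(G,C_L)\isom\hat{H}^0(G,C_L)=C_L^{G}/\Nm(C_L)$, and taking $G$-invariants of $1 \to L^{\times} \to \I_L \to C_L \to 1$ identifies $C_L^{G}\isom C_K$ (the boundary lands in $H^1(G,L^{\times})=1$ by Hilbert 90), so $\hat{H}^0(G,C_L)=C_K/\Nm(C_L)$. Assembling the three steps produces the asserted exact sequence. As a final bookkeeping check I would verify that, under the local invariant isomorphisms $\inv_v$, the cokernel map agrees with the sum-of-invariants map of Theorem \ref{tcft}, whose image on $\bigoplus_v\Br(L_w/K_v)$ is $\tfrac{1}{[L:K]}\Z/\Z$ (this uses, e.g. by Chebotarev, a place $v$ with full local degree $[L_w:K_v]=[L:K]$) and which matches $C_K/\Nm(C_L)$ through global reciprocity; this is what ties the lemma back to the fundamental exact sequence.

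The genuine obstacle is not any single computation but ensuring that cyclicity is used where it is truly needed and nowhere else. The periodicity isomorphisms $\hat{H}^3\isom\hat{H}^1$ and $\hat{H}^2\isom\hat{H}^0$ both fail for general $G$, and with them both the right-exactness (surjectivity of $\beta$) and the clean norm-quotient description of the cokernel break down; for non-cyclic $G$ the cokernel is the less transparent group $H^2(G,C_L)$. So the heart of the argument is precisely the reduction, legitimate only in the cyclic case, of $H^2(G,C_L)$ to the degree-zero norm quotient $C_K/\Nm(C_L)$, while the deep arithmetic content is entirely absorbed into the already-granted Theorem \ref{tcft} and Proposition \ref{p2}.
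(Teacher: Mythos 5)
Your proof is correct, but it follows a genuinely different route from the paper's. The paper never works in degree $2$ of the long exact sequence directly: it builds the three-term sequence $1\to\Br(L/K)\to\bigoplus_v\Br(L_w/K_v)\to\Q/\Z$ by applying the snake lemma to the map from the fundamental exact sequence of $K$ (Theorem \ref{tcft}) to that of $L$, writes down the degree-zero Tate sequence $1\to\hat{H}^0(L/K,L^{\times})\to\bigoplus_v\hat{H}^0(L_w/K_v,L_w^{\times})\to\hat{H}^0(L/K,C_L)\to 1$ (exact because $\hat{H}^{-1}(L/K,C_L)=1$ and $\hat{H}^1(L/K,L^{\times})=1$), and then uses the cyclic periodicity isomorphisms $\hat{H}^2\isom\hat{H}^0$ as vertical arrows identifying the two rows, so that exactness is transported from degree $0$. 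You instead stay in degree $2$ of the cohomology of $1\to L^{\times}\to\I_L\to C_L\to 1$: exactness in the middle is automatic, injectivity comes from Theorem \ref{tcft} for $K$ alone (the paper invokes it for both $K$ and $L$), surjectivity comes from $H^3(G,L^{\times})\isom\hat{H}^1(G,L^{\times})=1$ where $G=\Gal(L/K)$, and periodicity is used only to rename the abstract cokernel $H^2(G,C_L)$ as $C_K/\Nm{C_L}$ (together with $C_L^G\isom C_K$ via Hilbert 90). Your arrangement is arguably cleaner: it avoids the implicit verification that the periodicity isomorphisms commute with the horizontal maps in the paper's diagram, and it isolates precisely where cyclicity and where class field theory enter. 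What the paper's route buys is that the surjection onto $C_K/\Nm{C_L}$ is visibly the sum-of-invariants map of the fundamental exact sequence composed with the reciprocity identification --- the compatibility you relegate to your final optional check. Both arguments ultimately rest on the same pillars: the decomposition $\hat{H}^i(L/K,\I_L)\isom\bigoplus_v\hat{H}^i(L_w/K_v,L_w^{\times})$, Hilbert's Theorem 90, and periodicity for cyclic groups.
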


\begin{proof}
By mapping the fundamental exact sequence for $K$ to the fundamental exact sequence for $L$ and applying the snake lemma to the resulting diagram, we obtain the top row of the diagram
\[
\xymatrix{
1\ar[r] &\Br(L/K)\ar[r]\ar[d] &\bigoplus_{v}\Br(L_w/K_v)\ar[r]\ar[d] &\Q/\Z\\
1\ar[r] &\hat{H}^0(L/K,L^{\times})\ar[r] &\bigoplus_{v}\hat{H}^0(L_w/K_v,L_w^{\times})\ar[r] &\hat{H}^0(L/K,C_L)\ar[r] &1}
\]
The bottom row is obtained from the Tate cohomology sequence of 
$$1 \to L^{\times} \to \I_L \to C_L \to 1$$
and from the fact that 
$$\hat{H}^i(L/K,\I_L)\isom\bigoplus_{v}\hat{H}^i(L_w/K_v,L_w^{\times})$$
together with the fact that $\hat{H}^{-1}(L/K,C_L)=1=\hat{H}^1(L/K,L^{\times})$. Since $\Br(L/K)=H^2(L/K,L^{\times})$ and likewise for $\Br(L_w/K_v)$, periodicity of the Tate cohomology of finite cyclic groups imply that the vertical arrows in the above diagram are isomorphisms. Hence, the image of the map $\bigoplus_{v}\Br(L_w/K_v)\to\Q/\Z$ is $\hat{H}^0(L/K,C_L)=C_K/\Nm{C_L}$.
\end{proof}

\section{Torsors under algebraic tori}

Consider again a $d$-dimensional torus $T$ defined over any perfect field $K$. Let $X$ be any $K$-torsor under $T$. In particular, $X$ is a smooth, geometrically integral $K$-variety. The \'{e}tale cohomology group $H^2(X,\G_m)$ is called the \emph{cohomological Brauer group} of $X$ \cite{ec} and denoted by $\Br(X)$, and $\Pic(X)=H^1(X,\G_m)$ is the \emph{Picard group} of $X$. The kernel of the map $\Br(X)\to\Br(\overline{X})$ is denoted by $\Br_1(X)$ and called the \emph{algebraic Brauer group} of $X$. The low degree terms of the Hochschild-Serre spectral sequence
$$H^p(G_K, H^q(\overline{X}, \G_m))\Longrightarrow H^{p+q}(X,\G_m)$$
yields the exact sequence
\begin{align*}
1 &\to H^1(K,\overline{K}[X]^{*}) \to H^1(X,\G_m) \to {H^1(\overline{X},\G_m)}^{G_K}\\
&\to H^2(K,\overline{K}[X]^{*}) \to \ker(H^2(X,\G_m)\to {H^2(\overline{X},\G_m)}^{G_K})\\ 
&\to H^1(K,H^1(\overline{X},\G_m))
\end{align*}
Here, $\overline{K}{[X]}^{*}$ is the group of invertible functions on $X$. Noting that $H^1(\overline{X},\G_m)=\Pic(\overline{X})=\Pic(\overline{T})=1$, we obtain isomorphisms
$$\Pic(X)\isom H^1(K, \overline{K}{[X]}^{*})\;\textrm{and}\;\Br_1(X)\isom H^2(K, \overline{K}{[X]}^{*})$$
By a lemma of Rosenlicht (see \cite{sko}), there is an exact sequence
$$1 \to {\overline{K}}^{\times} \to \overline{K}{[X]}^{*} \to X^{*}(T) \to 1$$
of $G_K$-modules. The corresponding long exact sequence of cohomology, combined with Hilbert's Theorem 90 yields the exact sequence
\begin{align*}
1 &\to H^1(K, \overline{K}{[X]}^{*}) \to H^1(K, X^{*}(T)) \to \Br(K)\\
&\to H^2(K, \overline{K}{[X]}^{*}) \to H^2(K, X^{*}(T)) \to 1
\end{align*}
where the $1$ on the right follows from the fact that $H^3(K, {\overline{K}}^{\times})=1$. Combining this with the isomorphisms above, we obtain

\begin{lemma}\label{l4}
There is an exact sequence
$$1 \to \Pic(X) \to H^1(K,X^{*}(T)) \to \Br(K) \to \Br_1(X) \to H^2(K,X^{*}(T)) \to 1$$
\end{lemma}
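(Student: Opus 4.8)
The plan is to compute $\Pic(X)$ and $\Br_1(X)$ as Galois cohomology groups of the unit group $\overline{K}{[X]}^{*}$, and then to transport a six-term cohomology sequence attached to Rosenlicht's short exact sequence through those identifications. In other words, I would assemble the statement by splicing together two exact sequences: one coming from the Hochschild--Serre spectral sequence for $\G_m$ on $X$, the other coming from the $G_K$-module extension relating $\overline{K}^{\times}$, $\overline{K}{[X]}^{*}$, and $X^{*}(T)$.

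First I would run the low-degree (seven-term) exact sequence of the Hochschild--Serre spectral sequence $H^p(G_K, H^q(\overline{X}, \G_m)) \Rightarrow H^{p+q}(X, \G_m)$. The essential geometric input is that $\overline{X}$, being a torsor under $T$ over the algebraically closed field $\overline{K}$, has a rational point and is therefore isomorphic to $\overline{T} \isom \G_m^d$; since $\Pic(\G_m^d) = 0$ we obtain $H^1(\overline{X}, \G_m) = \Pic(\overline{X}) = 0$. Feeding this vanishing into the seven-term sequence kills the terms ${H^1(\overline{X},\G_m)}^{G_K}$ and $H^1(K, H^1(\overline{X},\G_m))$, and the sequence collapses into the two isomorphisms $\Pic(X) \isom H^1(K, \overline{K}{[X]}^{*})$ and $\Br_1(X) \isom H^2(K, \overline{K}{[X]}^{*})$, where on the Brauer side the relevant edge term is exactly $\ker(\Br(X) \to \Br(\overline{X})) = \Br_1(X)$.

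Next I would invoke Rosenlicht's lemma, which furnishes the short exact sequence of $G_K$-modules $1 \to \overline{K}^{\times} \to \overline{K}{[X]}^{*} \to X^{*}(T) \to 1$, and take its long exact cohomology sequence. Hilbert's Theorem $90$ gives $H^1(K, \overline{K}^{\times}) = 0$, which simultaneously makes $H^1(K, \overline{K}{[X]}^{*}) \to H^1(K, X^{*}(T))$ injective (supplying the leftmost $1$) and identifies $H^2(K, \overline{K}^{\times})$ with $\Br(K)$; the vanishing of $H^3(K, \overline{K}^{\times})$ forces surjectivity onto $H^2(K, X^{*}(T))$ at the right. This yields the exact sequence
$$1 \to H^1(K, \overline{K}{[X]}^{*}) \to H^1(K, X^{*}(T)) \to \Br(K) \to H^2(K, \overline{K}{[X]}^{*}) \to H^2(K, X^{*}(T)) \to 1,$$
and substituting the two isomorphisms from the previous step gives the claimed sequence.

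The depth of the argument sits in the two inputs I am treating as black boxes, namely Rosenlicht's identification of $\overline{K}{[X]}^{*}/\overline{K}^{\times}$ with the character lattice $X^{*}(T)$ (which is precisely where the torsor structure is used) and the vanishing $\Pic(\G_m^d)=0$; the remainder is bookkeeping. The main obstacle I anticipate is at the two ends of the spliced sequence: on the left one must confirm that the spectral-sequence collapse produces genuine \emph{isomorphisms} rather than merely a short exact sequence, which needs the full vanishing of $\Pic(\overline{X})$, and on the right one must justify terminating in $1$, which relies on the cohomological-dimension fact $H^3(K, \overline{K}^{\times}) = 0$ for local and global fields. I would also check that the two independently obtained edge identifications of $\Br_1(X)$ and of $\Br(K)$ are compatible with the connecting homomorphisms, so that exactness holds at each node after splicing.
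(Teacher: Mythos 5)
Your proposal is correct and matches the paper's own argument essentially step for step: the same Hochschild--Serre collapse via $\Pic(\overline{X})=\Pic(\overline{T})=1$, the same identifications $\Pic(X)\isom H^1(K,\overline{K}[X]^{*})$ and $\Br_1(X)\isom H^2(K,\overline{K}[X]^{*})$, and the same splicing with the cohomology sequence of Rosenlicht's extension using Hilbert's Theorem 90 and $H^3(K,\overline{K}^{\times})=1$. Your closing remarks about checking compatibility of the edge maps and the dependence of the $H^3$ vanishing on $K$ being local or global are sensible refinements that the paper passes over silently.
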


Suppose now that $X$ has a $K$-rational point. Then the retraction $\Spec{K}\to X\to \Spec{K}$ shows that the sequence
$$1 \to H^n(K,{\overline{K}}^{\times}) \to H^n(K,\overline{K}{[X]}^{*}) \to H^n(K,X^{*}(T)) \to 1$$
is exact and split for all $n\geq{1}$. It then follows from the discussion above that we have

\begin{lemma}\label{l5}
Suppose that $X(K)\neq\emptyset$. Then $\Pic(X)\isom H^1(K,X^{*}(T))$, and there is an exact sequence
$$1 \to \Br(K) \to \Br_1(X) \to H^2(K,X^{*}(T)) \to 1$$
\end{lemma}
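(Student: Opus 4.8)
The plan is to exploit the $K$-rational point to split the Rosenlicht sequence $G_K$-equivariantly, and then to read off both assertions from the resulting split long exact cohomology sequence together with the identifications $\Pic(X)\isom H^1(K,\overline{K}{[X]}^{*})$ and $\Br_1(X)\isom H^2(K,\overline{K}{[X]}^{*})$ already established above.

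First I would fix a point $x_0\in X(K)$ and consider the evaluation homomorphism $\mathrm{ev}_{x_0}\colon \overline{K}{[X]}^{*}\to\overline{K}^{\times}$, $f\mapsto f(x_0)$. On constant functions this is the identity, so $\mathrm{ev}_{x_0}$ retracts the inclusion $\overline{K}^{\times}\hra\overline{K}{[X]}^{*}$ appearing in the Rosenlicht sequence. The crucial point is $G_K$-equivariance: for $\sigma\in G_K$ one has $(\sigma f)(x_0)=\sigma\bigl(f(\sigma^{-1}x_0)\bigr)$, and since $x_0$ is defined over $K$ we have $\sigma^{-1}x_0=x_0$, whence $\mathrm{ev}_{x_0}(\sigma f)=\sigma\,\mathrm{ev}_{x_0}(f)$. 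Thus the Rosenlicht sequence is split as a sequence of $G_K$-modules; equivalently, this retraction is what justifies the assertion above that for every $n\geq 1$ the sequence
$$1 \to H^n(K,\overline{K}^{\times}) \to H^n(K,\overline{K}{[X]}^{*}) \to H^n(K,X^{*}(T)) \to 1$$
is short exact and split, all connecting maps vanishing.

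Next I would specialize. Taking $n=1$, Hilbert's Theorem 90 gives $H^1(K,\overline{K}^{\times})=1$, so the middle and right terms are isomorphic; combined with $\Pic(X)\isom H^1(K,\overline{K}{[X]}^{*})$ this yields the first claim $\Pic(X)\isom H^1(K,X^{*}(T))$. Taking $n=2$ and identifying $H^2(K,\overline{K}^{\times})=\Br(K)$ and $H^2(K,\overline{K}{[X]}^{*})\isom\Br_1(X)$ turns the split short exact sequence into
$$1 \to \Br(K) \to \Br_1(X) \to H^2(K,X^{*}(T)) \to 1,$$
which is the second claim.

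There is essentially no deep geometric or analytic obstacle here; the one point demanding care is the verification that the evaluation retraction is $G_K$-equivariant, which is exactly where the hypothesis $X(K)\neq\emptyset$ (a point rational over $K$, not merely over $\overline{K}$) enters. As an alternative to building the retraction explicitly, one can argue purely from Lemma \ref{l4}: the existence of $x_0$ forces the connecting map $H^1(K,X^{*}(T))\to\Br(K)$ in that six-term sequence to vanish, so the sequence breaks into the two stated pieces. Either route is routine once the splitting is in hand.
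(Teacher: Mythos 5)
Your proof is correct and follows essentially the same route as the paper: the evaluation map $\mathrm{ev}_{x_0}$ you construct (with its $G_K$-equivariance check) is precisely the map on invertible functions induced by the paper's retraction $\Spec K\to X\to\Spec K$, and both arguments then read off the two claims from the resulting split cohomology sequences for $n=1,2$ via Hilbert's Theorem 90 and the identifications $\Pic(X)\isom H^1(K,\overline{K}[X]^{*})$, $\Br_1(X)\isom H^2(K,\overline{K}[X]^{*})$. Your explicit equivariance verification and the alternative argument via the vanishing of the connecting map in Lemma \ref{l4} are welcome elaborations of details the paper leaves implicit, but they do not constitute a different method.
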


\section{Main Results}
We now prove the first main theorem of this paper.

\begin{theorem}\label{t6}
Suppose that $L/K$ is a finite, cyclic extension of perfect fields with Galois group $G$. Let $T$ be a $d$-dimensional $K$-torus that is split over $L$, and let $X$ be a $K$-torsor under $T$. Let $\Br_1(X_{L/K})$ denote the kernel of the map $\Br_1(X)\to\Br_1(X_L)^G$. Then there is an exact sequence
$$1\to H \to \Br(L/K) \to \Br_1(X_{L/K}) \to H^2(L/K,X^{*}(T)) \to 1$$
where $H$ is a finite group of order $\displaystyle\frac{[H^1(G, X^{*}(T))]}{[\Pic(X)]}$.
\end{theorem}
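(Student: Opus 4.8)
The plan is to compare the two instances of the five-term exact sequence of Lemma \ref{l4}, one over $K$ and one over $L$, linked by the base-change (restriction) maps, and to read off the desired sequence from the kernels of these vertical maps. Write $\alpha,\beta,\gamma$ for the maps $H^1(K,X^{*}(T))\to\Br(K)$, $\Br(K)\to\Br_1(X)$, $\Br_1(X)\to H^2(K,X^{*}(T))$ of Lemma \ref{l4}, and $\alpha_L,\beta_L,\gamma_L$ for their analogues over $L$. The crucial simplification is that $T$ splits over $L$: as computed just before Lemma \ref{l1a}, $H^1(L,X^{*}(T))=0$, so the $L$-sequence forces $\Pic(X_L)=0$ and, in particular, $\beta_L$ is \emph{injective}. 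I would then record the kernels of the vertical restriction maps: $\ker(\Br(K)\to\Br(L))=\Br(L/K)$ by definition; $\ker(\Br_1(X)\to\Br_1(X_L))=\Br_1(X_{L/K})$ (restriction lands in the $G$-invariants, so this agrees with the kernel in the statement); and $\ker(H^2(K,X^{*}(T))\to H^2(L,X^{*}(T)))=H^2(L/K,X^{*}(T))$ together with $H^1(K,X^{*}(T))\isom H^1(L/K,X^{*}(T))$, both by Lemma \ref{l1a}.

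Next I would construct the two maps of the asserted sequence as restrictions of $\beta$ and $\gamma$. Set $H:=\ker\beta=\Ima\alpha$, the image of $H^1(K,X^{*}(T))$ in $\Br(K)$. A short commutativity check shows $H\subseteq\Br(L/K)$: if $x\in\Br(K)$ dies in $\Br_1(X)$ then its restriction to $\Br_1(X_L)$ vanishes, hence its image in $\Br(L)$ lies in $\ker\beta_L=0$. The same kind of chase shows $\beta$ carries $\Br(L/K)$ into $\Br_1(X_{L/K})$ and $\gamma$ carries $\Br_1(X_{L/K})$ into $H^2(L/K,X^{*}(T))$, giving maps $\Br(L/K)\to\Br_1(X_{L/K})\to H^2(L/K,X^{*}(T))$ with the first having kernel exactly $H$. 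Exactness in the middle, i.e. $\Ima(\beta|_{\Br(L/K)})=\ker(\gamma|_{\Br_1(X_{L/K})})$, again reduces to injectivity of $\beta_L$: if $b=\beta(x)$ lies in $\Br_1(X_{L/K})$ then $\beta_L(\res x)=0$ forces $\res x=0$, i.e. $x\in\Br(L/K)$. Finally the $K$-sequence gives $1\to\Pic(X)\to H^1(K,X^{*}(T))\to H\to 1$, whence $[H]=[H^1(K,X^{*}(T))]/[\Pic(X)]=[H^1(G,X^{*}(T))]/[\Pic(X)]$, which is finite; this is the asserted order.

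The one genuinely nontrivial point, and the place where the cyclic hypothesis is indispensable, is the surjectivity of $\Br_1(X_{L/K})\to H^2(L/K,X^{*}(T))$. Given $\xi$ in the target, lift it to $b\in\Br_1(X)$ with $\gamma(b)=\xi$; since $\gamma_L(\res b)=\res(\xi)=0$, the class $\res b$ lies in $\ker\gamma_L=\Ima\beta_L$, say $\res b=\beta_L(y)$ with $y\in\Br(L)$. To correct $b$ into $\Br_1(X_{L/K})$ I must subtract $\beta$ of a class in $\Br(K)$ restricting to $y$, so the obstruction is precisely the class of $y$ in $\coker(\res:\Br(K)\to\Br(L))$. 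Because $\res b$ is a restriction it is $G$-invariant, and $G$-equivariance of $\beta_L$ together with its injectivity forces $y\in\Br(L)^{G}$; the obstruction therefore lives in $\Br(L)^{G}/\Ima\res$. By the Hochschild--Serre spectral sequence for $1\to G_L\to G_K\to G\to 1$ applied to $\overline{K}^{\times}$ (Hilbert 90 killing every $H^1(G_L,\overline{K}^{\times})$ term) this cokernel injects into $H^3(G,L^{\times})$, and for cyclic $G$ two-periodicity of Tate cohomology gives $H^3(G,L^{\times})\isom H^1(G,L^{\times})=0$, again by Hilbert 90. Hence the obstruction vanishes and the map is onto. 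I expect this surjectivity step to be the main hurdle, the rest being diagram chasing; equivalently, one may package everything as the snake lemma applied to $1\to\ker\gamma\to\Br_1(X)\to H^2(K,X^{*}(T))\to 1$ and its $L$-analogue, the connecting map being exactly the obstruction just shown to vanish.
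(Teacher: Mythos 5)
Your proposal is correct and follows essentially the same route as the paper: comparing the Lemma \ref{l4} sequences over $K$ and $L$ via restriction, identifying the kernels through Lemma \ref{l1a} and the injectivity of $\Br(L)\to\Br_1(X_L)$ (from $H^1(L,X^{*}(T))=0$), and using cyclicity only to kill $H^3(L/K,L^{\times})=H^1(L/K,L^{\times})=1$ so that the surjectivity onto $H^2(L/K,X^{*}(T))$ goes through. The paper packages the chase as an (extended) snake lemma on the diagram whose bottom row is the $G$-invariants of the $L$-sequence, which is exactly the reformulation you note at the end, so the two arguments coincide in substance.
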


\begin{proof}
Consider the commutative diagram
\[
\xymatrix{
&\Br(K)\ar[r]\ar[d] &\Br_1(X)\ar[r]\ar[d] &H^2(K,X^{*}(T))\ar[r]\ar[d] &1\\
1\ar[r] &{\Br(L)}^G\ar[r] &{\Br_1(X_L)}^G\ar[r] &{H^2(L,X^{*}(T))}^G}
\]
Here the top row is part of the exact sequence in Lemma \ref{l4}. The bottom row also follows from the same lemma over $L$ by taking $G$-invariants and by noting that $H^1(L, X^{*}(T))=1$. The vertical arrows are restriction maps on cohomology induced by the inclusion $K\subseteq{L}$. The Hochschild-Serre spectral sequence gives the exact sequence
$$1 \to \Br(L/K) \to \Br(K) \to {\Br(L)}^G \to H^3(L/K,L^{\times})$$
while the periodicity of the cohomology of cyclic groups give
$$H^3(L/K,L^{\times})=H^1(L/K,L^{\times})=1$$
Thus the leftmost vertical map in the diagram is surjective, and has kernel $\Br(L/K)$. The middle vertical map has kernel $\Br_1(X_{L/K})$ and the rightmost vertical map has kernel $H^2(L/K,X^{*}(T))$ by Lemma \ref{l1a}. The extended snake lemma produces the exact sequence
\begin{align*}
1 \to &\ker(\Br(K)\to\Br_1(X)) \to \Br(L/K) \to \Br_1(X_{L/K}) \\
&\to H^2(L/K,X^{*}(T)) \to 1
\end{align*}
Denoting $\ker(\Br(K)\to\Br_1(X))$ by $H$, we get the desired exact sequence. By Lemma \ref{l1a} and Lemma \ref{l4}, we find that there is an exact sequence
$$1 \to \Pic(X) \to H^1(L/K, X^{*}(T)) \to H \to 1$$
By Theorem \ref{t3}, the group $H^1(L/K, X^{*}(T))$ is finite and hence, so is $\Pic(X)$.
\end{proof}

Denoting by $\Br_0(X_{L/K})$ the image of the map $\Br(L/K)\to\Br_1(X_{L/K})$, we thus get an isomorphism
$$\Br_1(X_{L/K})/\Br_0(X_{L/K})\isom H^2(L/K,X^{*}(T))$$
In particular, the quotient group $\Br_1(X_{L/K})/\Br_0(X_{L/K})$ is finite. We now note that in the proof of Theorem \ref{t6}, the cyclicity of $L/K$ is used only in establishing the triviality of $H^3(L/K,L^{\times})$. However, this also follows without the cyclicity assumption when we have a finite, Galois extension of local fields, by \cite[Cor 7.2.2]{nsw}. The next corollary then follows immediately from Theorem \ref{t6} and Theorem \ref{t2}.

\begin{corollary}
Let $L/K$ be a finite, Galois extension of local fields. Then there is a canonical perfect pairing of finite abelian groups
$$\Br_1(X_{L/K})/\Br_0(X_{L/K})\times T(K)/\Nm T(L)\to\Q/\Z$$
\end{corollary}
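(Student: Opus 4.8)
The plan is to identify both factors of the desired pairing with a single finite abelian group and its Pontryagin dual, and then take the evaluation pairing.

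First I would address the fact that Theorem \ref{t6} is stated only for cyclic $L/K$. As noted in the remark preceding the corollary, cyclicity enters that proof solely to obtain $H^3(L/K,L^{\times})=1$ via periodicity; for a finite Galois extension of local fields this vanishing holds unconditionally by \cite[Cor 7.2.2]{nsw}. Consequently the conclusion of Theorem \ref{t6}, and in particular the isomorphism
$$\Br_1(X_{L/K})/\Br_0(X_{L/K})\isom H^2(L/K,X^{*}(T))$$
displayed after its proof, remains valid here without any cyclicity hypothesis. Since $X^{*}(T)$ is a finitely generated $\Z[G]$-module and $G$ is finite, $H^2(L/K,X^{*}(T))$ is finite, so the left-hand side is a finite abelian group.

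Next I would apply the local Nakayama-Tate theorem (Theorem \ref{t2}) in degree $i=0$, giving a canonical isomorphism
$$\hat{H}^0(L/K,T)\isom\bigl(\hat{H}^2(L/K,X^{*}(T))\bigr)^{\vee}.$$
Because Tate cohomology coincides with ordinary cohomology in degrees $\geq 1$, the dualized group is $H^2(L/K,X^{*}(T))$, while by definition of degree-zero Tate cohomology one has $\hat{H}^0(L/K,T)=T(L)^G/\Nm T(L)=T(K)/\Nm T(L)$. Combining with the previous display, both $\Br_1(X_{L/K})/\Br_0(X_{L/K})$ and $T(K)/\Nm T(L)$ are now expressed through the common finite group $M:=H^2(L/K,X^{*}(T))$, the former as $M$ itself and the latter as $M^{\vee}$. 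Transporting the tautological perfect pairing $M\times M^{\vee}\to\Q/\Z$ along these two isomorphisms yields the asserted pairing, and perfectness is automatic.

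I expect the only point requiring genuine attention to be \emph{canonicity} of the resulting pairing. Each ingredient is canonical in isolation---the Nakayama-Tate isomorphism arises from cup product against the fundamental class, and the isomorphism of Theorem \ref{t6} is produced by the snake lemma applied to a functorial commutative diagram---so the composite pairing inherits canonicity; the remaining work is the routine check that these identifications are mutually compatible and independent of auxiliary choices.
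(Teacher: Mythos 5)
Your proposal is correct and is essentially identical to the paper's own argument: the paper likewise observes that cyclicity enters Theorem \ref{t6} only through the vanishing of $H^3(L/K,L^{\times})$, which for local fields holds unconditionally by \cite[Cor 7.2.2]{nsw}, and then combines the resulting isomorphism $\Br_1(X_{L/K})/\Br_0(X_{L/K})\isom H^2(L/K,X^{*}(T))$ with the local Nakayama--Tate duality (Theorem \ref{t2}) at $i=0$, where $\hat{H}^0(L/K,T)=T(K)/\Nm T(L)$. Your additional attention to canonicity is a reasonable elaboration of what the paper leaves implicit.
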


For the rest of this section, we fix a finite, cyclic extension $L/K$ of global fields. The next corollary follows from Theorem \ref{t6} and Theorem \ref{t3}.

\begin{corollary}\label{c2}
There is a canonical perfect pairing of finite abelian groups
$$\Br_1(X_{L/K})/\Br_0(X_{L/K})\times C_L(T)^G/\Nm C_L(T)\to\Q/\Z$$
\end{corollary}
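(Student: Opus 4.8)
The plan is to leverage two facts already in hand: the isomorphism
$$\Br_1(X_{L/K})/\Br_0(X_{L/K})\isom H^2(L/K,X^{*}(T))$$
recorded immediately after Theorem \ref{t6} (valid since $L/K$ is cyclic), together with the global Nakayama--Tate duality of Theorem \ref{t3}. The strategy is simply to exhibit $C_L(T)^G/\Nm C_L(T)$ as the Pontryagin dual of $H^2(L/K,X^{*}(T))$, and then transport across the first isomorphism. Since a perfect pairing of finite abelian groups $A\times B\to\Q/\Z$ is the same datum as an isomorphism $B\isom A^{\vee}$, it suffices to produce such an isomorphism.

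First I would identify the degree-zero Tate cohomology group. By the definition of Tate cohomology in degree $0$ for the finite cyclic group $G$,
$$\hat{H}^0(L/K,C_L(T))=C_L(T)^G/\Nm C_L(T),$$
the group of norm-stable classes modulo the image of the norm map. Next I would specialize Theorem \ref{t3} to $i=0$, obtaining
$$\hat{H}^0(L/K,C_L(T))\isom{\hat{H}^{2}(L/K,X^{*}(T))}^{\vee}={H^2(L/K,X^{*}(T))}^{\vee},$$
where the last equality uses that Tate and ordinary cohomology coincide in positive degree. Combining the two displays yields
$$C_L(T)^G/\Nm C_L(T)\isom{H^2(L/K,X^{*}(T))}^{\vee},$$
and composing with the isomorphism $\Br_1(X_{L/K})/\Br_0(X_{L/K})\isom H^2(L/K,X^{*}(T))$ exhibits $C_L(T)^G/\Nm C_L(T)$ as the Pontryagin dual of $\Br_1(X_{L/K})/\Br_0(X_{L/K})$. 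This produces the asserted pairing, and it is perfect because Pontryagin duality is perfect on finite abelian groups. Finiteness of both sides follows from the finiteness of $\hat{H}^i(L/K,X^{*}(T))$ noted after Theorem \ref{t2}, together with Theorem \ref{t6}.

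The one point requiring care --- and what I expect to be the main obstacle --- is \emph{canonicity}. The duality in Theorem \ref{t3} is canonical, being realized by cup product against the fundamental class of $L/K$, so the resulting pairing between $\hat{H}^0(L/K,C_L(T))$ and $\hat{H}^2(L/K,X^{*}(T))$ is canonical; I would then want to check that the identification $\Br_1(X_{L/K})/\Br_0(X_{L/K})\isom H^2(L/K,X^{*}(T))$ emerging from Theorem \ref{t6} is likewise natural, so that the composite pairing depends on no auxiliary choices. Since that isomorphism is assembled from functorial maps (inflation and restriction, the snake lemma, and the Rosenlicht sequence), its naturality should be routine to verify, leaving no essential difficulty beyond bookkeeping.
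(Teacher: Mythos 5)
Your proposal is correct and matches the paper's own route exactly: the paper derives this corollary precisely by combining the isomorphism $\Br_1(X_{L/K})/\Br_0(X_{L/K})\isom H^2(L/K,X^{*}(T))$ (noted after Theorem \ref{t6}) with Theorem \ref{t3} at $i=0$, identifying $\hat{H}^0(L/K,C_L(T))=C_L(T)^G/\Nm C_L(T)$ as the Pontryagin dual of $\hat{H}^2(L/K,X^{*}(T))=H^2(L/K,X^{*}(T))$. Your additional attention to canonicity (cup product with the fundamental class, naturality of the maps in Theorem \ref{t6}) is a sound elaboration of what the paper leaves implicit.
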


\begin{remark}
When $L/K$ is a finite Galois extension, it is clear that we have an injection 
$\Br_1(X_{L/K})/\Br_0(X_{L/K})\hra H^2(L/K,X^{*}(T))$ such that the corresponding quotient
is a subgroup of $H^3(L/K,L^{\times})$.
\end{remark}

\begin{corollary}\label{c3}
Under the conditions of Theorem \ref{t6}, suppose further that $X(K)\neq\emptyset$. Then we have
$$\displaystyle\frac{[\Br_1(T_{L/K})/\Br_0(T_{L/K})]}{[\Pic(T)]}=h(G,C_L(T))$$
where $h(G,C_L(T))$ is the Herbrand quotient of $C_L(T)$.
\end{corollary}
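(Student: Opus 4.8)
The plan is to recognize the numerator and denominator of the ratio as orders of Tate cohomology groups of $X^{*}(T)$, and then to transport these to $C_L(T)$ by the global Nakayama-Tate duality of Theorem \ref{t3}, at which point the Herbrand quotient appears by inspection. The first step is to observe that the hypothesis $X(K)\neq\emptyset$ forces $X$ to be the trivial torsor, so that $X\isom T$ over $K$. This justifies the notation of the statement: under this isomorphism $\Br_1(X_{L/K})$, $\Br_0(X_{L/K})$ and $\Pic(X)$ are identified with $\Br_1(T_{L/K})$, $\Br_0(T_{L/K})$ and $\Pic(T)$, and in particular $T(K)\neq\emptyset$ (the identity section), so Lemma \ref{l5} becomes available.

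Next I would compute the numerator. The isomorphism $\Br_1(T_{L/K})/\Br_0(T_{L/K})\isom H^2(L/K,X^{*}(T))$ recorded immediately after the proof of Theorem \ref{t6} gives $[\Br_1(T_{L/K})/\Br_0(T_{L/K})]=[\hat{H}^2(G,X^{*}(T))]$, since ordinary and Tate cohomology coincide in positive degree. For the denominator, Lemma \ref{l5} applies and yields $\Pic(T)\isom H^1(K,X^{*}(T))$, while Lemma \ref{l1a} identifies the latter with $H^1(L/K,X^{*}(T))=\hat{H}^1(G,X^{*}(T))$. Hence the ratio in question equals $[\hat{H}^2(G,X^{*}(T))]/[\hat{H}^1(G,X^{*}(T))]$.

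Finally I would invoke Theorem \ref{t3}. Pontryagin duality preserves cardinality, so the isomorphisms $\hat{H}^0(L/K,C_L(T))\isom\hat{H}^2(L/K,X^{*}(T))^{\vee}$ and $\hat{H}^1(L/K,C_L(T))\isom\hat{H}^1(L/K,X^{*}(T))^{\vee}$ give $[\hat{H}^2(G,X^{*}(T))]=[\hat{H}^0(G,C_L(T))]$ and $[\hat{H}^1(G,X^{*}(T))]=[\hat{H}^1(G,C_L(T))]$. Substituting, the ratio becomes $[\hat{H}^0(G,C_L(T))]/[\hat{H}^1(G,C_L(T))]$, which is precisely $h(G,C_L(T))$.

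I do not expect a serious obstacle, as the argument is essentially bookkeeping once the torsor is trivialized. The one point that needs care is the index matching in Theorem \ref{t3} together with the period-$2$ periodicity of cyclic Tate cohomology: I must confirm that after duality the quotient lands with $\hat{H}^0(G,C_L(T))$ in the numerator and $\hat{H}^1(G,C_L(T))$ in the denominator, matching the convention for the Herbrand quotient, rather than being inverted. I would also note in passing that every group appearing is finite, by Theorem \ref{t3} and the finiteness remarks of Section 2, so that the orders and the Herbrand quotient are all well-defined.
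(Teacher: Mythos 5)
Your proof is correct and follows essentially the same route as the paper: trivialize the torsor via $X(K)\neq\emptyset$, identify the numerator with $[H^2(L/K,X^{*}(T))]$ (via Theorem \ref{t6}) and the denominator with $[H^1(L/K,X^{*}(T))]$ (via Lemmas \ref{l5} and \ref{l1a}), then apply the Nakayama--Tate duality of Theorem \ref{t3} to land on $[\hat{H}^0(G,C_L(T))]/[\hat{H}^1(G,C_L(T))]=h(G,C_L(T))$. Your explicit index-checking of the duality and the periodicity convention is exactly the bookkeeping the paper leaves implicit.
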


\begin{proof}
Note that $X(K)\neq\emptyset$ implies that $X\isom T$ over $K$. The result now follows from Lemma \ref{l5}, Lemma \ref{l1a}, and Theorem \ref{t3}.
\end{proof}

\begin{remark}
The \emph{Tamagawa number} of $T$, denoted by $\tau(T)$, is defined as the volume of a certain homogeneous space associated to $T(\A_K)$ with respect to the \emph{Tamagawa measure} \cite{weil}. Using the remarkable result of Ono \cite{ono1}
$$\tau(T)=\displaystyle\frac{[H^1(K,X^{*}(T))]}{[\Sha(T/K)]}$$
we obtain from Corollary \ref{c3} that
$$\displaystyle\frac{[\Br_1(T_{L/K})/\Br_0(T_{L/K})]}{[\Sha(T/K)]}=\tau(T)\,h(G,C_L(T))$$
\end{remark}

\begin{proposition}
Suppose that $T$ is a one dimensional torus and $X$ is a $K$-torsor under $T$. Assuming the conditions of Theorem \ref{t6}, we have
$$[\Br_1(X_{L/K})/\Br_0(X_{L/K})]=[L:K]$$
\end{proposition}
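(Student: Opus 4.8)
The plan is to reduce everything to a single group-cohomology computation. Immediately after Theorem~\ref{t6} one has the isomorphism
$$\Br_1(X_{L/K})/\Br_0(X_{L/K})\isom H^2(L/K,X^{*}(T)),$$
so the order in question equals $[H^2(G,X^{*}(T))]$, where $G=\Gal(L/K)$ is cyclic of order $[L:K]$; note in particular that the torsor $X$ itself drops out and only the character module matters. Since $T$ is one-dimensional, $X^{*}(T)$ is a torsion-free finitely generated $\Z[G]$-module of rank one, hence $X^{*}(T)\isom\Z$ as an abelian group, and the $G$-action is given by a homomorphism $G\to\mathrm{Aut}(\Z)=\{\pm1\}$. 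First I would fix this action; for the split torus (equivalently, the trivial module $\Z$) the action is trivial, and this is the case I would treat.

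Granting trivial action, I would compute $H^2(G,\Z)$ from the short exact sequence of trivial $G$-modules $0\to\Z\to\Q\to\Q/\Z\to0$. As $\Q$ is uniquely divisible, $H^i(G,\Q)=0$ for $i\geq1$, so the connecting map gives $H^2(G,\Z)\isom H^1(G,\Q/\Z)=\Hom(G,\Q/\Z)$; for $G$ cyclic of order $[L:K]$ this group is cyclic of order $[L:K]$, yielding the claimed equality. As a consistency check fully in the spirit of the paper, I would instead apply the perfect pairing of Corollary~\ref{c2}, which gives $[\Br_1(X_{L/K})/\Br_0(X_{L/K})]=[C_L(T)^{G}/\Nm C_L(T)]$; for $\G_m$ one has $C_L(T)=C_L$ and $\Sha(\G_m/K)=0$, so Proposition~\ref{p5} forces $C_L^{G}=C_K$, and the index $[C_K:\Nm C_L]=[L:K]$ is exactly the norm-index theorem of global class field theory for a cyclic extension (compatibly with Lemma~\ref{l3}).

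The step I expect to be the genuine obstacle is controlling the $G$-action on $X^{*}(T)$, since the final order depends on it. For a nonsplit one-dimensional torus the action is the nontrivial character $G\to\{\pm1\}$, and the twisted module $\Z$ does not have $[H^2(G,\Z)]=[L:K]$; thus the argument as sketched establishes the equality precisely for the trivial-action (split) case, and the cleanest formulation of the proposition makes this hypothesis on $X^{*}(T)$ explicit. I would therefore open the proof by recording the action and then run the divisible-resolution computation above.
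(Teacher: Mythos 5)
Your proposal is correct, and it is in fact more careful than the paper's own proof. The paper's argument is exactly your ``consistency check'' route: by Corollary \ref{c2} the order in question equals $[C_L(T)^G/\Nm C_L(T)]$, the paper then asserts $C_L(T)=C_L$ because $T$ is one-dimensional, and concludes with the norm index theorem $[C_K:\Nm C_L]=[L:K]$ for cyclic extensions (\cite[Thm 8.1.1]{nsw}). Your primary route instead uses the isomorphism $\Br_1(X_{L/K})/\Br_0(X_{L/K})\isom H^2(L/K,X^{*}(T))$ recorded after Theorem \ref{t6} and computes $H^2(G,\Z)\isom\Hom(G,\Q/\Z)$ from $0\to\Z\to\Q\to\Q/\Z\to 0$; this is more elementary, since all the arithmetic is already packaged in Theorem \ref{t6} and no further class field theory is needed, whereas the paper's route re-invokes global duality through Corollary \ref{c2}.

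More importantly, the obstacle you flag in your last paragraph is a genuine gap in the paper, not in your argument. The identification $C_L(T)=C_L$ holds only as abelian groups: as $G$-modules, $C_L(T)\isom\Hom(X^{*}(T),C_L)$ carries the conjugation action, so it is the twist of $C_L$ by the character through which $G$ acts on $X^{*}(T)\isom\Z$. When that character is nontrivial --- i.e.\ $T$ is the norm-one torus of the quadratic subextension of $L/K$, which satisfies every hypothesis of Theorem \ref{t6} whenever $[L:K]$ is even --- one has $X^{*}(T)^G=0$, so by periodicity of the cohomology of cyclic groups $H^2(L/K,X^{*}(T))\isom\hat{H}^0(G,X^{*}(T))=0$, hence $\Br_1(X_{L/K})=\Br_0(X_{L/K})$ and the claimed equality would read $1=[L:K]$, which is false. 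So the proposition as stated holds exactly in the split case $T\isom\G_m$ that you treat, and your suggestion to make the trivial-action hypothesis explicit is the correct fix rather than a weakness of your proof.
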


\begin{proof}
Since $T$ has dimension one, we have $C_L(T)=C_L$. Corollary \ref{c2} then implies that
$$[\Br_1(X_{L/K})/\Br_0(X_{L/K})]=[\hat{H}^0(G,C_L)]=[L:K]$$
where the last equality follows from \cite[Thm 8.1.1]{nsw}
\end{proof}

We now prove the second main theorem of this paper. Let $v$ be any prime of $K$ and $w$ be that of $L$ dividing $v$. We denote the corresponding completions by $K_v$ and $L_w$. Let $\Br_1'(X_{L/K})$ and $\Cok$ be defined by the exactness of the sequence
$$1 \to \Br_1'(X_{L/K}) \to \Br_1(X_{L/K}) \to \bigoplus_{v}\Br_1(X_{L_w/K_v}) \to \Cok \to 1$$
where the sum is over all primes $v$ of $K$.

\begin{theorem}\label{t7}
Assume the hypothesis of Theorem \ref{t6}. Suppose further that $X(K_v)\neq\emptyset$ for every prime $v$. Then there is an exact sequence
$$1 \to \Br_1'(X_{L/K}) \to \Sha(T/K) \to C_K/ \Nm{C_L}\to \Cok \to C \to 1$$
where $C$ is a finite group of order $[T(\A_K)\cap\Nm(C_L(T)):\Nm(T(\A_L))]$. 
\end{theorem}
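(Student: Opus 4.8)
The plan is to compare the four-term exact sequence of Theorem~\ref{t6} for the field $K$ with the direct sum over all $v$ of its analogues for the completions $K_v$, and to feed the resulting diagram of localization maps into the snake lemma. First I would record the local input. Since $X(K_v)\neq\emptyset$ we have $X_{K_v}\isom T_{K_v}$, so Lemma~\ref{l5} over $K_v$ gives $\Pic(X_{K_v})\isom H^1(K_v,X^{*}(T))$, while the local form of Lemma~\ref{l1a} gives $H^1(K_v,X^{*}(T))\isom H^1(L_w/K_v,X^{*}(T))$. Comparing with the local analogue of the sequence $1\to\Pic(X)\to H^1(L/K,X^{*}(T))\to H\to 1$ from the proof of Theorem~\ref{t6}, the local group $H_v=\ker(\Br(K_v)\to\Br_1(X_{K_v}))$ vanishes. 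Running the proof of Theorem~\ref{t6} over $K_v$ is legitimate without cyclicity because $H^3(L_w/K_v,L_w^{\times})=0$ for a Galois extension of local fields by \cite[Cor 7.2.2]{nsw}, and so for each $v$ I obtain a short exact sequence $1\to\Br(L_w/K_v)\to\Br_1(X_{L_w/K_v})\to H^2(L_w/K_v,X^{*}(T))\to 1$.

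Next I would place the global short exact sequence $1\to\Br_0(X_{L/K})\to\Br_1(X_{L/K})\to H^2(L/K,X^{*}(T))\to 1$ (the right-hand half of Theorem~\ref{t6}) over the direct sum of the local sequences, with the three vertical localization maps; functoriality of restriction makes the squares commute. The composite of $\Br(L/K)\twoheadrightarrow\Br_0(X_{L/K})$ with the left vertical map is the localization $\Br(L/K)\to\bigoplus_v\Br(L_w/K_v)$, which is injective by Lemma~\ref{l3}; hence that composite is injective, forcing $H=\ker(\Br(L/K)\to\Br_0(X_{L/K}))=0$. Thus $\Br_0(X_{L/K})\isom\Br(L/K)$, the left vertical map is exactly the injection of Lemma~\ref{l3} with cokernel $C_K/\Nm C_L$, and its kernel is trivial. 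By Corollary~\ref{c1} the right vertical map has kernel $\Sha(T/K)$, while the middle vertical map is the defining map of $\Br'_1(X_{L/K})$ and $\Cok$, so its kernel is $\Br'_1(X_{L/K})$ and its cokernel is $\Cok$. The snake lemma then produces exactly $1\to\Br'_1(X_{L/K})\to\Sha(T/K)\to C_K/\Nm C_L\to\Cok\to C\to 1$, where $C$ is the cokernel of the localization map $r_3\colon H^2(L/K,X^{*}(T))\to\bigoplus_v H^2(L_w/K_v,X^{*}(T))$.

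It then remains to compute $[C]=[C^{\vee}]=[\ker(r_3^{\vee})]$. By the local and global Nakayama--Tate isomorphisms (Theorems~\ref{t2} and~\ref{t3}) together with Proposition~\ref{p2}, the Pontryagin dual $r_3^{\vee}$ becomes a map $\hat{H}^0(L/K,T(\A_L))\to\hat{H}^0(L/K,C_L(T))$, that is $T(\A_K)/\Nm T(\A_L)\to C_L(T)^{G}/\Nm C_L(T)$. I would identify this with the natural map $\beta$ in the Tate cohomology sequence of $1\to T(L)\to T(\A_L)\to C_L(T)\to 1$ (the degree-$0$ connecting map induced by $T(\A_L)\to C_L(T)$). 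Granting this identification, $\ker\beta$ consists of those classes of $a\in T(\A_K)$ whose image in $C_L(T)^{G}$ lies in $\Nm C_L(T)$, so $\ker\beta=(T(\A_K)\cap\Nm C_L(T))/\Nm(T(\A_L))$; since $\hat{H}^0(L/K,T(\A_L))=\bigoplus_v T(K_v)/\Nm T(L_w)$ has only finitely many nonzero, finite summands, this group is finite. Hence $C$ is finite of order $[T(\A_K)\cap\Nm(C_L(T)):\Nm(T(\A_L))]$, as claimed.

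The main obstacle is precisely the identification of $r_3^{\vee}$ with $\beta$: it is the compatibility of the Nakayama--Tate cup-product pairings with, on the one hand, the localization maps, and on the other, the quotient $T(\A_L)\to C_L(T)$ induced by $\I_L\to C_L$. Concretely this reduces to the statement that, under the cup products, the sum $\sum_v\inv_v$ of local invariant maps agrees with the global invariant map on $\hat{H}^2(L/K,C_L)$ --- i.e. to the fundamental exact sequence of class field theory recorded in Theorem~\ref{tcft}. Rather than re-derive the pairings, I would extract this compatibility directly from the construction of the duality isomorphisms in \cite{nsw}; it is the same input that underlies the Poitou--Tate sequence and that was already used implicitly in deriving Theorem~\ref{t4}.
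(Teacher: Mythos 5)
Your proposal is correct and follows essentially the same route as the paper: the same localization diagram of the two short/four-term exact sequences from Theorem \ref{t6} (global over the direct sum of local ones), the snake lemma, Lemma \ref{l3} and Corollary \ref{c1} to identify the outer kernels and cokernels, and the identical Nakayama--Tate duality computation of the final cokernel $C$. The only differences are cosmetic refinements --- you first show $H=1$ (so $\Br_0(X_{L/K})\isom\Br(L/K)$) before applying the snake lemma, whereas the paper runs the snake lemma directly with $\Br(L/K)$ and $\ker\alpha_1=1$, and you explicitly flag the duality-compatibility point that the paper uses implicitly.
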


\begin{proof}
Consider the commutative diagram
\[
\xymatrix{
&\Br(L/K)\ar[r]\ar^{\alpha_1}[d] &\Br_1(X_{L/K})\ar[r]\ar^{\alpha_2}[d] &H^2(L/K,X^{*}(T))\ar[r]\ar^{\alpha_3}[d] &1\\
1\ar[r] &\bigoplus_{v}\Br(L_w/K_v)\ar[r] &\bigoplus_{v}\Br_1(X_{L_w/K_v})\ar[r] &\bigoplus_{v}H^2(L_w/K_v,X^{*}(T))\ar[r] &1}
\]
Both rows follow from the exact sequence in Theorem \ref{t6}. The $1$ at the left of the bottom row follows from Lemma \ref{l5}. Snake lemma then gives an exact sequence
$$\ker{\alpha_1} \to \ker{\alpha_2} \to \ker{\alpha_3} \xrightarrow{\delta} \coker{\alpha_1} \to \coker{\alpha_2} \to \coker{\alpha_3} \to 1$$
By Lemma \ref{l3}, we have $\ker{\alpha_1}=1$ and $\coker{\alpha_1}\isom C_K/\Nm{C_L}$. Clearly, $\ker{\alpha_2}=\Br_1'(X_{L/K})$ and $\coker{\alpha_2}=\Cok$. On the other hand, we have $\ker{\alpha_3}\isom{\Sha(T/K)}^{\vee}\isom\Sha(T/K)$ (since $\Sha(T/K)$ is finite) by Corollary \ref{c1}. The exact sequence above now is
$$1 \to \Br_1'(X_{L/K}) \to \Sha(T/K) \xrightarrow{\delta} C_K/\Nm{C_L}\to\Cok\to\coker{\alpha_3}\to 1$$
To finish the proof, we note that
\begin{align*}
\coker{\alpha_3}&=\coker\left(H^2(L/K, X^{*}(T))\to\bigoplus_{v}H^2(L_w/K_v,X^{*}(T))\right)\\
&\isom\coker\left({\hat{H}^0(L/K,C_L(T))}^{\vee}\to{\hat{H}^0(L/K,T(\A_L))}^{\vee}\right)\\
&\isom\left(\ker\left(\hat{H}^0(L/K,T(\A_L))\to\hat{H}^0(L/K,C_L(T))\right)\right)^{\vee}\\
&\isom\left(\ker\left(T(\A_K)/\Nm{T(\A_L)}\to{C_L(T)}^G/\Nm{C_L(T)}\right)\right)^{\vee}\\
&\isom\left({T(\A_K)\cap\Nm{C_L(T)}}/\Nm{T(\A_L)}\right)^{\vee}
\end{align*}

\end{proof}

The following corollary is immediate.

\begin{corollary}
We have
$$\frac{[\Br'_1(X_{L/K})]}{[\Cok]}=\frac{[\Sha(T/K)]}{[L:K][T(\A_K)\cap\Nm(C_L(T)):\Nm(T(\A_L))]}$$
\end{corollary}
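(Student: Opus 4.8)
The plan is to read the corollary directly off the five-term exact sequence produced in Theorem \ref{t7}, by taking the alternating product of the orders of its terms. First I would check that all five groups in
$$1 \to \Br_1'(X_{L/K}) \to \Sha(T/K) \xrightarrow{\delta} C_K/\Nm{C_L}\to\Cok\to C\to 1$$
are finite, so that the orders make sense: $\Sha(T/K)$ is finite by Proposition \ref{p4}, the term $C_K/\Nm{C_L}$ is finite, and $C$ is finite by Theorem \ref{t7}. It then follows by exactness that the two remaining terms are finite as well, since $\Br_1'(X_{L/K})$ injects into the finite group $\Sha(T/K)$, while $\Cok$ is an extension of a subgroup of $C$ by a quotient of $C_K/\Nm{C_L}$.

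For any exact sequence $1\to A_1\to A_2\to A_3\to A_4\to A_5\to 1$ of finite abelian groups the alternating product of orders equals $1$. Applying this to the sequence above gives
$$\frac{[\Br_1'(X_{L/K})]\,[C_K/\Nm C_L]\,[C]}{[\Sha(T/K)]\,[\Cok]}=1.$$
It then remains only to substitute the two already-known orders. Because $L/K$ is cyclic, the fundamental identity of global class field theory (see \cite[Thm 8.1.1]{nsw}) yields $[C_K/\Nm C_L]=[\hat{H}^0(L/K,C_L)]=[L:K]$, exactly as in the one-dimensional computation above, and by Theorem \ref{t7} we have $[C]=[T(\A_K)\cap\Nm(C_L(T)):\Nm(T(\A_L))]$. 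Rearranging the displayed identity produces
$$\frac{[\Br_1'(X_{L/K})]}{[\Cok]}=\frac{[\Sha(T/K)]}{[L:K]\,[T(\A_K)\cap\Nm(C_L(T)):\Nm(T(\A_L))]},$$
which is the assertion.

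I do not anticipate a genuine obstacle here beyond careful bookkeeping; the corollary is a formal consequence of Theorem \ref{t7}. The single input one must not overlook is the class-field-theoretic evaluation $[C_K:\Nm C_L]=[L:K]$, and this is precisely the place where the cyclicity of $L/K$ is used.
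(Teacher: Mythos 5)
Your proposal is correct and matches the paper's intent: the paper declares this corollary ``immediate'' from Theorem \ref{t7}, and the implied argument is precisely yours --- take the alternating product of orders in the five-term exact sequence, then substitute $[C]=[T(\A_K)\cap\Nm(C_L(T)):\Nm(T(\A_L))]$ and the cyclic norm-index equality $[C_K:\Nm C_L]=[L:K]$ from \cite[Thm 8.1.1]{nsw}. Your explicit finiteness check and the identification of where cyclicity enters are exactly the bookkeeping the paper leaves to the reader.
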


By Theorem \ref{t7}, we can identify $\Br'_1(X_{L/K})$ as the kernel of the map $\Sha(T/K)\to C_K/\Nm{C_L}$. Hence, by Proposition \ref{p5}, we get

\begin{proposition}
There is an isomorphism
$$\Br'_1(X_{L/K})\isom{C_L(T)}^{G}\cap\Nm(C_L)/C_K(T)$$
\end{proposition}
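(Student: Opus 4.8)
The plan is to obtain the isomorphism as a direct consequence of the exact sequence produced in Theorem~\ref{t7} together with the description of $\Sha(T/K)$ in Proposition~\ref{p5}. Theorem~\ref{t7} yields the exact sequence
$$1 \to \Br_1'(X_{L/K}) \to \Sha(T/K) \xrightarrow{\delta} C_K/\Nm C_L \to \Cok \to C \to 1,$$
so the first, and cheapest, step is simply to record that $\Br_1'(X_{L/K}) = \ker\delta$, where $\delta$ is the connecting homomorphism of the snake lemma. This reduces the problem to computing $\ker\delta$ after transporting it along the isomorphism $\Sha(T/K)\isom C_L(T)^G/C_K(T)$ of Proposition~\ref{p5}.

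The second step is to make $\delta$ explicit under this identification. I would trace $\delta$ through its definition in the proof of Theorem~\ref{t7}: it is the connecting map of the ladder whose rows are the global and the (summed) local copies of the Theorem~\ref{t6} sequence, where $\ker\alpha_3\isom\Sha(T/K)$ via Corollary~\ref{c1} and $\coker\alpha_1\isom C_K/\Nm C_L$ via Lemma~\ref{l3}. Once $\Sha(T/K)$ is rewritten by Proposition~\ref{p5} as the subquotient $C_L(T)^G/C_K(T)$, the expectation is that $\delta$ is induced by the norm--residue data underlying Lemma~\ref{l3}, so that the class of an element $c\in C_L(T)^G$ is killed by $\delta$ precisely when $c$ maps into $\Nm C_L$ under the reciprocity identification. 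Granting this, $\ker\delta$ is the image of $C_L(T)^G\cap\Nm C_L$ in $C_L(T)^G/C_K(T)$, which is the group $(C_L(T)^G\cap\Nm C_L)/C_K(T)$ of the statement, since $C_K(T)$ already lies in $\ker\delta$.

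The main obstacle I anticipate is exactly this identification of $\delta$. The snake-lemma description is only implicit, and converting it into the concrete criterion ``the class of $c$ dies under $\delta$ iff $c\in\Nm C_L$'' requires checking that the reciprocity isomorphisms of Lemma~\ref{l3}, Corollary~\ref{c1} and Proposition~\ref{p5} are mutually compatible, i.e. that the dualities of Theorems~\ref{t2} and~\ref{t3} intertwine $\delta$ with the norm map on idele class groups. Once that compatibility is established the conclusion is immediate; in particular the finiteness of $\Br_1'(X_{L/K})$ is automatic, as it is then realized as a subgroup of the finite group $\Sha(T/K)$.
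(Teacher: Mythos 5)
Your proposal is correct and takes essentially the same route as the paper: the paper's own (two-sentence) proof likewise identifies $\Br_1'(X_{L/K})$ as the kernel of the map $\Sha(T/K)\to C_K/\Nm C_L$ furnished by Theorem~\ref{t7} and then invokes Proposition~\ref{p5}. The compatibility of the connecting map $\delta$ with the norm--reciprocity identifications of Lemma~\ref{l3}, Corollary~\ref{c1} and Proposition~\ref{p5}, which you flag as the main obstacle, is precisely the step the paper also leaves implicit, so your proposal supplies everything the paper's argument does.
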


\newpage
\newcommand{\etalchar}[1]{$^{#1}$}

\end{document}